\documentclass[12pt]{amsart}

\author{Simeon Ball and Enrique Jimenez}\thanks{2010 {\it Mathematics Subject Classification.} 51M04, 52C35. \\
The first author acknowledges the support of the project  MTM2017-82166-P of the Spanish {\em Ministerio de Econom\'ia y Competitividad.}}

 \usepackage{amsmath}
\usepackage{amssymb}
\usepackage{amsthm}
\usepackage{graphicx}
\usepackage{amscd}
\usepackage{amsthm,amsmath,amssymb,amsfonts,amscd}

\usepackage{enumerate}
\usepackage{epstopdf}
\usepackage{booktabs}
\usepackage{subcaption}
\usepackage{array}
\usepackage{multirow}
\usepackage{listings}

 \textwidth 160mm
 \textheight 210mm
 \topmargin 0cm
 \oddsidemargin 0cm
 \evensidemargin 0cm
 \parskip 2mm
  \setlength{\parindent}{0pt}

\newtheorem{theorem}{Theorem}
\newtheorem{lemma}[theorem]{Lemma}

 \newtheorem{conjecture}[theorem]{Conjecture}

\title{On sets defining few ordinary solids}

 \usepackage{amsmath}
\usepackage{amssymb}
\usepackage{amsthm}
\usepackage{graphicx}
\usepackage{amscd}

\begin{document}

\baselineskip=17pt

\date{20 October 2020}

\maketitle

%\keywords{ordinary planes, eight associated points theorem, sylvester-gallai, green-tao}

\begin{abstract}
Let $\mathcal{S}$ be a set of $n$ points in real four-dimensional space, no four coplanar and spanning the whole space. We prove that if the number of solids incident with exactly four points of $\mathcal{S}$ is less than $Kn^3$ for some $K=o(n^{\frac{1}{7}})$ then, for $n$ sufficiently large, all but at most $O(K)$ points of $\mathcal{S}$ are contained in the intersection of five linearly independent quadrics. Conversely, we prove that there are finite subgroups of size $n$ of an elliptic curve which span less than $\frac{1}{6}n^3$ solids containing exactly four points of $\mathcal{S}$.
\end{abstract}

\section{Introduction}

This work is based on the articles of Green and Tao \cite{GT2013} and the first author \cite{Ball2018}. Green and Tao proved that if $\mathcal{S}$ is a set of $n$ points in the real plane with the property that the number of lines incident with exactly two points of $\mathcal{S}$ is less than $Kn$,  where $K<c(\log \log n)^c$ for some constant $c$ and $n$ is sufficiently large, then all but at most $O(K)$ points of $\mathcal{S}$ are contained in a (possibly degenerate) cubic curve. The lines incident with exactly two points of $\mathcal{S}$ are called {\em ordinary lines}. In fact Green and Tao give a complete classification of the sets of $n$ points $\mathcal{S}$ spanning less than $cn(\log \log n)^c$ ordinary lines. Their paper inspired not only \cite{Ball2018} but also articles concerning ordinary circles \cite{LMMSSdZ2016} and ordinary conics \cite{BVZ2016} and \cite{CDFGLMSST2016}.

The first author proved in \cite{Ball2018} that if $\mathcal{S}$ is a set of $n$ points in real three-dimensional space with the property that no three points are collinear, $\mathcal{S}$ spans the whole space, and $\mathcal{S}$ spans at most $Kn^2$ planes incident with exactly three points of $\mathcal{S}$, where $K=o(n^{1/7})$ and $n$ is sufficiently large, then all but at most $O(K)$ points of $\mathcal{S}$ are contained in the intersection of two quadrics. The planes incident with exactly three points of $\mathcal{S}$ are called {\em ordinary planes}.

Suppose that $\mathcal{S}$ is a set of $n$ points in four-dimensional space containing no four co-planar points and spanning the whole space. We say that a hyperplane, a three-dimensional subspace, is an {\em ordinary solid}, if it is incident with exactly four points of $\mathcal{S}$. In this article we will prove that if $\mathcal{S}$ spans at most $Kn^3$ ordinary solids, where $K=o(n^{1/7})$ and $n$ is sufficiently large, then all but at most $O(K)$ points of $\mathcal{S}$ are contained in the intersection of five linearly independent quadrics. We will, for the most part, refer to an ordinary solid as an ordinary hyperplane, to ease comparison with results in other dimensions.

In the next section we shall provide examples of sets of $n$ points in four-dimensional real space which span less than $\frac{1}{6}n^3$ ordinary solids. This implies that there are examples attaining the bound for $K=\frac{1}{6}$.

Let ${\mathbb P}^d({\mathbb R})$ denote the $d$-dimensional projective space over ${\mathbb R}$. The notation $$(x_1:x_2:\ldots :x_{d+1})$$ denotes the projective point with vector representative $(x_1,x_2,\ldots,x_{d+1})$.

\section{Elliptic curves}

Let $\mathcal{E}$ be the elliptic curve in the plane defined as the zeros of the polynomial
$$
X_3^2X_1-X_2^3-aX_2X_1^2-bX_1^3.
$$
The points on this elliptic curve are
$$
\{(1:x : y ) \ | \ y^2=x^3+ax+b \} \cup \{ O \},
$$
where $O$ is the point $(0:0:1)$. One can define a binary operation $\oplus$ on $\mathcal{E}$ in such a way that $(\mathcal{E},\oplus)$ is an abelian group with identity element $O$, see, for example, Silverman \cite{Silverman2009}.

Explicitly this operation is defined in the following way. A {\em divisor} is a finite integer sum of points of $\mathcal{E}$. Suppose $g$ and $h$ are non-zero homogenous polynomials of the same degree in ${\mathbb R}[X_1,X_2,X_3]/(X_3^2X_1-X_2^3-aX_2X_1^2-bX_1^3)$. The divisor $(g/h)$ of $g/h$ is the sum of the points of $\mathcal{E}$ (over ${\mathbb C}$) which are zeros of $g$ minus the sum of points of $\mathcal{E}$ which are zeros of $h$, where intersections are counted with multiplicity. On the set of divisors we define an equivalence relation where $D \cong D'$ if and only if 
$$
D=D'+(g/h),
$$
for some $g$ and $h$. We define the sum $P \oplus Q$ as the point $T$ of $\mathcal{E}$ for which 
$$
P-O+Q-O \cong T-O.
$$
We can prove that this is well-defined. Let $h$ be the linear form whose kernel is the line joining $P$ and $Q$. There is a third point $R$ on $\mathcal{E}$ (counting with multiplicity) on the line joining $P$ and $Q$. Let $g$ be the linear form whose kernel is the line joining $R$ and $O$. The point $T$ is the third point on this line. Observe that
$$
(g/h)=T+R+O-(P+Q+R)=T+O-P-Q.
$$
Thus, we have proved that 
$$
P-O+Q-O+(g/h)=T-O,
$$
so $\oplus$ is well-defined. Moreover, this binary operation has the property that 
$$
P \oplus Q \oplus R=O
$$
if and only if $P$, $Q$ and $R$ are collinear.

Let $\phi$ denote the morphism $\mathcal{E} \rightarrow {\mathbb P}^4({\mathbb R})$ defined by
$$
\phi( (1:x:y))=(1:x:y:x^2:xy),
$$
and $\phi(O)=(0:0:0:0:1)$.

We can define a binary operation $\circ$ on $\phi(\mathcal{E})$ by 
$$
\phi(P) \circ \phi(Q)=\phi(P \oplus Q).
$$

\begin{lemma} \label{spansolid}
For distinct points $P$, $Q$, $R$ and $S$, the subspace spanned by $\phi(P),\phi(Q),\phi(R),\phi(S)$ is a solid.
\end{lemma}

\begin{proof}
Suppose that none of the points is the point $O$.
We need to show that the rank of the matrix
$$
\left(
\begin{array}{ccccc}
1 & x_1 & y_1 & x_1^2 & x_1y_1 \\
 1 & x_2 & y_2 & x_2^2 & x_2y_2 \\
 1 & x_3 & y_3 & x_3^2 & x_3y_3 \\
 1 & x_4 & y_4 & x_4^2 & x_4y_4 \\
\end{array}
\right)
$$
is four. If not then there are $a,b,c,d,e,f$ such that 
$$
y_i=a+bx_i+cx_i^2, \ \mathrm{and} \ xy_i=d+ex_i+fx_i^2,
$$
for all $i \in \{1,2,3,4\}$.
Eliminating $y$ from these equations we obtain a polynomial equation of degree $3$ satisfied by each $x_i$, $i \in \{1,2,3,4\}$. Thus,
$$
aX+bX^2+cX^3=d+eX+fX^2,
$$
which implies $a=e$, $b=f$ and $d=c=0$. This then implies that $Y=a+bX$ is a line incident with all four points $P$, $Q$, $R$ and $S$, contradicting the fact that a line is incident with at most three points of an elliptic curve.

Suppose that one of the points is the point $O$ which, without loss of generality, we can assume to be the point $P$. It is clear that the matrix
$$
\left(
\begin{array}{ccccc}
0 & 0 & 0 & 0 & 1 \\
 1 & x_2 & y_2 & x_2^2 & x_2y_2 \\
 1 & x_3 & y_3 & x_3^2 & x_3y_3 \\
 1 & x_4 & y_4 & x_4^2 & x_4y_4 \\
\end{array}
\right)
$$
has rank four, so the subspace spanned by $\phi(P),\phi(Q),\phi(R),\phi(S)$ is a solid.
\end{proof}

\begin{lemma} \label{spansolid2}
The group $(\phi(\mathcal{E}),\circ )$ is isomorphic to $(\mathcal{E},\oplus)$. The identity element of $(\phi(\mathcal{E}),+)$ is $\phi(O)$ and the binary operation has the property that
$$
\phi(P)\circ \phi(Q)\circ \phi(R)\circ \phi(S)\circ \phi(T)=\phi(O)
$$
if and only if $\phi(P)$, $\phi(Q)$, $\phi(R)$, $\phi(S)$ and $\phi(T)$ are contained in the same solid (hyperplane) of ${\mathbb P}^4({\mathbb R})$.
\end{lemma}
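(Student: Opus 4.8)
The plan is to treat the two assertions separately, reducing the geometric statement to a divisor computation on $\mathcal{E}$, exactly as collinearity was reduced above to the condition $P\oplus Q\oplus R=O$.

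The group isomorphism is essentially formal. The map $\phi$ is injective: on affine points the first three coordinates of $\phi((1:x:y))$ recover $x$ and $y$, while $\phi(O)=(0:0:0:0:1)$ has vanishing first coordinate and so differs from every $\phi((1:x:y))$. Hence $\phi$ is a bijection onto $\phi(\mathcal{E})$, the operation $\circ$ is well defined, and $\phi\colon(\mathcal{E},\oplus)\to(\phi(\mathcal{E}),\circ)$ is a group isomorphism by construction, carrying $O$ to $\phi(O)$. Transporting through it, $\phi(P)\circ\phi(Q)\circ\phi(R)\circ\phi(S)\circ\phi(T)=\phi(O)$ is equivalent to $P\oplus Q\oplus R\oplus S\oplus T=O$. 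Since five points of $\mathbb{P}^4(\mathbb{R})$ lie in a common solid precisely when their vector representatives are linearly dependent, it remains to show
\[
P\oplus Q\oplus R\oplus S\oplus T=O\ \Longleftrightarrow\ \phi(P),\phi(Q),\phi(R),\phi(S),\phi(T)\ \text{are linearly dependent.}
\]

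I would first record the behaviour at $O$: on $\mathcal{E}$ the functions $x,y,x^2,xy$ have poles only at $O$, of orders $2,3,4,5$, which confirms that $\phi$ extends to the stated morphism with $\phi(O)=(0:0:0:0:1)$ and shows that $X_0$ vanishes to order exactly $5$ along $\phi(\mathcal{E})$ at $\phi(O)$. It follows that, for a linear form $\ell=\sum_{i=0}^{4}c_iX_i$, the pullback to $\mathcal{E}$ via $\phi$ of the solid $Z(\ell)=\{\ell=0\}$ equals $\mathrm{div}(r_\ell)+5O$, where $r_\ell=c_0+c_1x+c_2y+c_3x^2+c_4xy$; this is a divisor of degree $5$, since $r_\ell$ has its only pole at $O$, of order at most $5$. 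Also $1,x,y,x^2,xy$ have distinct pole orders at $O$, hence are linearly independent as functions on $\mathcal{E}$ (so $\phi(\mathcal{E})$ spans $\mathbb{P}^4(\mathbb{R})$), and the standard filtration $L(O)\subset\dots\subset L(5O)$ by pole order gives $\dim L(5O)\le 5$, so these five functions form a basis of $L(5O)$. Granting this: if $P,\dots,T$ are distinct and lie in a common solid $Z(\ell)$, then $r_\ell\not\equiv 0$ and it vanishes at the five distinct points, so $5\le\deg(\text{zeros of }r_\ell)=\deg(\text{poles of }r_\ell)\le 5$ forces $\mathrm{div}(r_\ell)=P+Q+R+S+T-5O$, whence $P+\dots+T\sim 5O$. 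Iterating the defining property of $\oplus$ gives $(P-O)+\dots+(T-O)\sim(P\oplus\dots\oplus T)-O$, and comparing with $P+\dots+T-5O\sim 0$ yields $(P\oplus\dots\oplus T)-O\sim 0$, i.e.\ $P\oplus\dots\oplus T=O$ since $A-O\sim 0$ forces $A=O$ on an elliptic curve. Conversely, if $P\oplus\dots\oplus T=O$, running this in reverse gives $P+\dots+T\sim 5O$, so that divisor is $\mathrm{div}(r)$ for some $r\in L(5O)$; writing $r=c_0+c_1x+c_2y+c_3x^2+c_4xy$ and $\ell=\sum c_iX_i$ (a nonzero form), the solid $Z(\ell)$ contains all of $\phi(P),\dots,\phi(T)$, which are hence linearly dependent. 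If some of $P,\dots,T$ equal $O$, the argument is the same with $L(5O)$ replaced by $L(kO)$ and the remark that $\phi(O)\in Z(\ell)$ exactly when $c_4=0$; if two of the points coincide the statement is to be read with the usual tangency convention (the solid cutting $\phi(\mathcal{E})$ in the corresponding degree-$5$ divisor), and the same proof applies.

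The one genuine obstacle is the local analysis at $O$ — equivalently, recognising $\phi(\mathcal{E})$ as an elliptic quintic curve in $\mathbb{P}^4$ and identifying $L(5O)$ with the span of $1,x,y,x^2,xy$; once this is in hand the equivalence is a direct translation through the group law, mirroring the planar case. What remains is purely bookkeeping around the point $O$ and around coincidences among $P,\dots,T$.
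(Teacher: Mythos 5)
Your argument is correct, and it reaches the key identity by a different route than the paper. The paper also reduces everything to the linear equivalence $P+Q+R+S+T\sim 5O$, but it gets there explicitly: a $5\times 5$ determinant shows that the solid through four points of $\phi(\mathcal{E})$ cuts out on $\mathcal{E}$ the equation $y(x+c_3)=c_2x^2+c_1x+c_0$, i.e.\ a plane conic through $O$; eliminating $y$ against $y^2=x^3+ax+b$ gives a degree-$5$ polynomial whose fifth root is real, and the divisor of $g/h^2$ (with $g$ that conic and $h$ the line at infinity, so six intersection points by B\'ezout, one of them $O$) is $P+Q+R+S+T+O-6O$. You instead identify the hyperplane sections of $\phi(\mathcal{E})$ intrinsically with the linear system $|5O|$, using the pole orders $0,2,3,4,5$ of $1,x,y,x^2,xy$ at $O$ to recognise these functions as a basis of $L(5O)$; the rest is the same translation through the group law. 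Your version is cleaner, avoids the somewhat ad hoc determinant computation, and generalises at once to the embeddings by $|dO|$ in higher dimensions; the paper's version is more elementary (plane B\'ezout rather than Riemann--Roch) and makes explicit the reality of the fifth intersection point, which is what the paper actually uses later to count ordinary solids on subgroups. Two small points you should make explicit: the bound $\dim L(5O)\le 5$ needs $\dim L(O)=1$ (no function with a single simple pole, i.e.\ $\mathcal{E}$ smooth of genus one), which is exactly where nondegeneracy of the cubic enters; and in the converse direction the case where the function $r$ is constant corresponds to all five points equalling $O$, which is trivially covered. Neither is a gap, just bookkeeping worth a sentence.
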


\begin{proof} (adapted from \cite[Proposition 3]{BGP2015}.)

Suppose that none of the points is the point $O$, so that $\phi(P)$, $\phi(Q)$, $\phi(R)$, and $\phi(S)$ are the points 
$$
(1:x_i:y_i:x_i^2:x_iy_i),
$$
$i\in \{1,2,3,4\}$, for some $x_i$ and $y_i$. 

By Lemma~\ref{spansolid}, these four points span a solid. If there is another point $(1:x:y:x^2:xy)$ on this solid then
$$
\left| \begin{array}{ccccc} 1 & x & y & x^2 & xy \\ 
1 & x_1 & y_1 & x_1^2 & x_1y_1 \\ 
1 & x_2 & y_2 & x_2^2 & x_2y_2 \\ 
1 & x_3 & y_3 & x_3^2 & x_3y_3 \\ 
1 & x_4 & y_4 & x_4^2 & x_4y_4 
\end{array}
\right|=0. 
$$
This gives
$$
y(x+c_3)=c_2 x^2+c_1x+c_0,
$$
for some $c_0,c_1,c_2,c_3$.  If this point is also a point of $\mathcal{E}$ then
$$
y^2=x^3+ax+b.
$$
Eliminating $y$ from these two equations we get a polynomial of degree $5$ in $x$. There are four real solutions $x_1,x_2,x_3,x_4$ and the fifth real solution gives us the point $\phi(T)$. 

Suppose that $\phi(P)$, $\phi(Q)$, $\phi(R)$, $\phi(S)$ and $\phi(T)$ are contained in the same solid (hyperplane) of ${\mathbb P}^4({\mathbb R})$.
Then there is a quadratic form $g=g(X_1,X_2,X_3)$ defining a conic containing $P$, $Q$, $R$, $S$ and $O$. Since the above determinant is zero this conic necessarily contains $T$. Let $h$ be the linear form whose kernel is the hyperplane $X_1=0$. Then the divisor
$$
(g/h^2)=P +Q+ R + S+ T + O-6O
$$
so
$$
P-O +Q-O+ R-O + S-O+ T-O 
$$
is zero in the quotient of the divisors, which implies
$$
P \oplus Q \oplus R \oplus S \oplus T=O,
$$
and hence
$$
\phi(P)\circ \phi(Q)\circ \phi(R)\circ \phi(S)\circ \phi(T)=\phi(O).
$$
A similar argument works in the case that one of the points is the point $O$.

To prove the converse, suppose that
$$
\phi(P)\circ \phi(Q)\circ \phi(R)\circ \phi(S)\circ \phi(T)=\phi(O).
$$
Let $g=g(X_1,X_2,X_3)$ be the quadratic form defining the conic containing $P$, $Q$, $R$, $S$ and $O$ and let $h$ be the linear form whose kernel is the hyperplane $X_1=0$. Then
$$
(g/h^2)=P +Q+ R + S+ U + O-6O
$$
for some $U$ in the intersection of the conic and $\mathcal{E}$. Since, as divisors
$$
P +Q+ R + S+ T \cong 5O,
$$
we conclude that $T \cong U$. Hence, $T$ is on the conic defined by $g$. 

Coordinatising the points $\phi(P)$, $\phi(Q)$, $\phi(R)$, $\phi(S)$ and $\phi(T)$ as 
$$
(1:x_i:y_i:x_i^2:x_iy_i),
$$
for $i\in \{1,2,3,4,5\}$,
this implies that
$$
\left(
\begin{array}{cccccc}

1 & x_1 & y_1 & x_1^2 & x_1y_1 & y_1^2\\
 1 & x_2 & y_2 & x_2^2 & x_2y_2 & y_2^2\\
 1 & x_3 & y_3 & x_3^2 & x_3y_3 & y_3^2\\
 1 & x_4 & y_4 & x_4^2 & x_4y_4 & y_4^2\\
  1 & x_5 & y_5 & x_5^2 & x_5y_5& y_5^2 \\
  0 & 0 & 0 & 0 & 0 & 1 \\
\end{array}
\right)
$$
has rank $5$. Hence, the top left $5 \times 5$ submatrix has rank $4$, which implies that $\phi(P)$, $\phi(Q)$, $\phi(R)$, $\phi(S)$ and $\phi(T)$ are contained in the same solid of ${\mathbb P}^4({\mathbb R})$.
\end{proof}

Suppose $\mathcal{S}$ is a set of $n \geqslant 6$ points selected as a subgroup of $(\mathcal{E},\oplus)$. By Lemma~\ref{spansolid}, any four points of $\phi(\mathcal{S})$ span a solid and, by Lemma~\ref{spansolid2}, $\phi(\mathcal{S})$ spans ${\mathbb P}^4({\mathbb R})$. The number of ordinary solids spanned by $\phi(\mathcal{S})$ will be the number of solutions to
$$
2P \oplus Q \oplus R \oplus S=0,
$$
where $P$, $Q$, $R$ and $S$ are distinct points of $\mathcal{S}$. This, we can bound from by above by choosing $P$ and then choosing a $2$-subset $\{ Q,R\}$ of points of $\mathcal{S} \setminus \{P\}$. Then $\mathcal{S}$ spans at most
$$
\tfrac{1}{2}n (n-1) (n-2)
$$
ordinary solids. If $\mathcal{S}$ is $2$-torsion free then we can choose $\{ Q, R, S\}$
and bound the number of ordinary solids by
$$
\tfrac{1}{6}n (n-1) (n-2).
$$

Therefore, if the group of the elliptic curve has a $2$-torsion free subgroup of size $n$ then we can construct a set $\mathcal{S}$ of points in ${\mathbb P}^4({\mathbb R})$ with the property that no four points of $\mathcal{S}$ are co-planar, the set $\mathcal{S}$ spans the whole space and spans at most 
$$
\tfrac{1}{6}n (n-1) (n-2)
$$
ordinary solids.

The set of points of $\phi(\mathcal{E})$ is contained in the following five linearly independent quadrics,
$$
X_4X_1-X_2^2,\\
$$
$$
X_2X_3-X_5X_1,\\
$$
$$
X_3X_4-X_5X_2,\\
$$
$$
X_3^2-X_2X_4-aX_1X_2-bX_1^2,\\
$$
$$
X_3X_5-X_4^2-aX_4X_1-bX_1X_2.
$$
Our aim will be to prove a converse of this, that if the number of ordinary solids spanned by $\mathcal{S}$ is small then all but a few points of $\mathcal{S}$ are contained in the intersection of five linearly independent quadrics.

Apart from the elliptic curve example, we also have a trivial example which spans few ordinary solids. If we take $\mathcal{S}$ to be the a point $P$ union the set of $n-1$ points on a normal rational curve in a hyperplane not incident with $P$, then $\mathcal{S}$ spans at most
$$
\tfrac{1}{6}(n-1) (n-2)(n-3)
$$
ordinary solids.

\section{The graph associated with a set of hyperplanes}

Let $\mathcal{S}$ be a set of $n$ points in ${\mathbb P}^4({\mathbb R})$ with the property that any four points of $\mathcal{S}$ span a solid. We begin by defining the graph $\Gamma=\Gamma(S)$ associated with $\mathcal{S}$. In the dual space the points of $\mathcal{S}$ are hyperplanes and we denote this set of hyperplanes by $\mathcal{S}^*$. Let $p^*$ denote the solid in the dual space, dual to the point $p$. Let $p,q,r,s$ be four points of $\mathcal{S}$. Since $p$, $q$, $r$ and $s$ span a solid, in the dual space $p^* \cap q^*\cap r^* \cap s^*$ is a point. The point $p^* \cap q^*\cap r^* \cap s^*$ is defined to be a vertex of the graph. Observe that we do not rule out the possibility that there are more points of $\mathcal{S}$ whose dual is incident with this point. Indeed, with our additional hypothesis on $\mathcal{S}$ this will be the norm, rather than the exception. For each $p,q,r \in \mathcal{S}$, the edges of the graph $\Gamma$ are line segments on the line $p^*\cap q^*\cap r^*$ cut out by the vertices. For each $p,q \in \mathcal {S}$, the faces of the graph $\Gamma$ are the faces cut out by the edges in the plane $p^* \cap q^*$.

Following Green and Tao's use of a triangular grid in ${\mathbb P}^2({\mathbb R})$ in \cite{GT2013} and the first author's generalisation to ${\mathbb P}^3({\mathbb R})$ in \cite{Ball2018}, we further generalise these definitions to ${\mathbb P}^4({\mathbb R})$.

 Let $I, J, K, L, M$ be discrete intervals in $\mathbb{Z}$. A {\em $5$-cell grid} of
 dimension $$
 |I| \times |J| \times |K| \times  |L| \times  |M|
 $$ 
 is a collection of hyperplanes $(p_i^*)_{i \in I}$, 
 $(q_j^*)_{j \in J}$, $(r_k^*)_{k \in K}$, $(s_l^*)_{l \in L}$, $(t_m^*)_{m \in M}$
 in $\mathbb{P}^4({\mathbb R})$ such that $p_i^*$, $q_j^*$, $r_k^*$, $s_l^*$, $t_m^*$ intersect
 in a point if and only if $i + j + k + l + m = 0$ and no other hyperplane of the 
 grid passes through the point of intersection.

Along with the notion of $5$-cell grid, we introduce the notion of
good/bad edges. In their paper, Green and Tao defined good
and bad edges in order to characterise edges of the graph $\Gamma$ according to
whether they contributed to the triangular grid structure of the graph or not. \\

According to Green and Tao \cite{GT2013}, a good edge in the graph in $\mathbb{P}^2({\mathbb R})$ was an edge that was the side of two triangular faces and whose end vertices were each of
degree $6$. A bad edge was simply any edge that was not good.

In ${\mathbb P}^4({\mathbb R})$ we will define a good edge in the following way. A {\em good edge} of the graph ${\mathbb P}^4({\mathbb R})$ is any edge $e$ in $\Gamma$ such
 that any face that contains $e$ is a triangle and such that the end vertices of
 $e$ are each incident with exactly $5$ hyperplanes of $S^*$. A {\em bad edge} is any edge
 of $\Gamma$ that is not good.

To state this in the same way as Green and Tao we could say that the end vertices
of a good edge have degree $20$.

We say an edge of the graph $\Gamma$ is a {\em rather good edge} if it is a good edge and every edge coming from its end vertices is also a good edge. We will call an edge of $\Gamma$ {\em slightly bad} if it is not a rather good edge.

These two concepts are important, as their appearance comes naturally inside the
$5$-cell structure. The other implication is also true, that
the structure of $\Gamma$ around a rather good edge is that of a $5$-cell grid.
This fact will be more useful to us, and we will prove it now.

%The following lemma is from Ball \cite[Lemma 5]{Ball2018}.

%\begin{lemma}
% Let $e$ be a rather good edge of the graph $\Gamma$ lying on the line $p^* \cap
% q^* \cap r^*$. If there is a triangle in the plane $p^* \cap q^*$ with side
% $e$ that has its other sides cut out by $s^*$ and $t^*$, then there are
 %triangles in the planes $p^* \cap r^*$ and $q^* \cap r^*$ with side $e$ whose
% sides are also cut by $s^*$ and $t^*$ respectively.
%\end{lemma}

\begin{lemma} \label{rathergoodedge}
 The structure of $\Gamma$ around a rather good edge $e$ on the line $p_0^* \cap q_0^* \cap t_0^*$ cut on one end by $s_0^*$ and $r_0^*$ and the other end by $s_1^*$ and $r_{-1}^*$ is that of a $5$-cell grid where the hyperplanes $p_i^*$, $q_j^*$, $t_k^*$, $s_l^*$, $r_m^*$ intersect in a point if and only if $i + j + k + l + m = 0$ for all $i,j,k \in \{-1,0,1\}$, $l \in \{0,1\}$ and $m \in \{-1,0\}$ unless (possibly) $\{i,j,k\} \subseteq \{-1,1\}$.
\end{lemma}

\begin{proof}
%Let $e$ be a rather good edge on the line $p_0^* \cap q_0^* \cap t_0^*$ cut on one end by $s_0^*$ and $r_0^*$ and the other end by $s_1^*$ and $r_{-1}^*$. 

Since $e$ is a rather good edge, the edge along the line $p_0^* \cap q_0^* \cap r_0^*$, cut out by $p_0^* \cap q_0^* \cap r_0^*\cap s_0^* \cap t_0^*$ and $p_0^* \cap q_0^* \cap r_0^*\cap s_1^*$ is a good edge, so there is another point, $t_{-1} \in \mathcal S$, such that $t_{-1}^*$ is incident with the point $p_0^* \cap q_0^* \cap r_0^*\cap s_1^*$.

Similarly, the edge along the line $p_0^* \cap q_0^* \cap r_{-1}^*$, cut out by $p_0^* \cap q_0^* \cap r_{-1}^*\cap s_1^* \cap t_0^*$ and $p_0^* \cap q_0^* \cap r_{-1}^*\cap s_0^*$ is a good edge, so there is another point, $t_{1} \in \mathcal S$, such that $t_{1}^*$ is incident with the point $p_0^* \cap q_0^* \cap r_{-1}^*\cap s_0^*$.

By Ball \cite[Lemma 4.1]{Ball2018}, in each of the planes $p_0^* \cap q_0^*$, $p_0^* \cap t_0^*$ and $t_0^* \cap q_0^*$, the edge $e$ is in a triangle whose other two sides are cut out by (after a possible relabelling) $s_0^*$ and $r_{-1}^*$ on one side and $r_0^*$ and $s_1^*$ on the other. In other words, the triangles are cut out by the same pair of hyperplanes in each of the three planes.

Thus, the points $p_0$, $q_0$ and $t_0$ are interchangeable in the above argument, so we have that there are points $p_{1},p_{-1},q_{1},q_{-1}$ in $\mathcal S$ which are incident with the points $t_0^* \cap q_0^* \cap r_{-1}^*\cap s_0^*$, $t_0^* \cap q_0^* \cap r_0^*\cap s_1^*$, $t_0^* \cap p_0^* \cap r_{-1}^*\cap s_0^*$, $t_0^* \cap p_0^* \cap r_0^*\cap s_1^*$, respectively.

According to Ball \cite[Lemma 4.2]{Ball2018}, in the solid $t_0^*$, the structure around the rather good edge $e$ is a double diamond, Figure~\ref{doublediamond}. By reading off the points in Figure~\ref{doublediamond}, we see that the hyperplanes $p_i^*$, $q_j^*$, $t_k^*$, $s_l^*$, $r_m^*$ intersect in a point if and only if $i + j + k + l + m = 0$ for all $i,j \in \{-1,0,1\}$, $k=0$, $l \in \{0,1\}$ and $m \in \{-1,0\}$.

\begin{figure}[h]
\centering
\includegraphics[width=6 in]{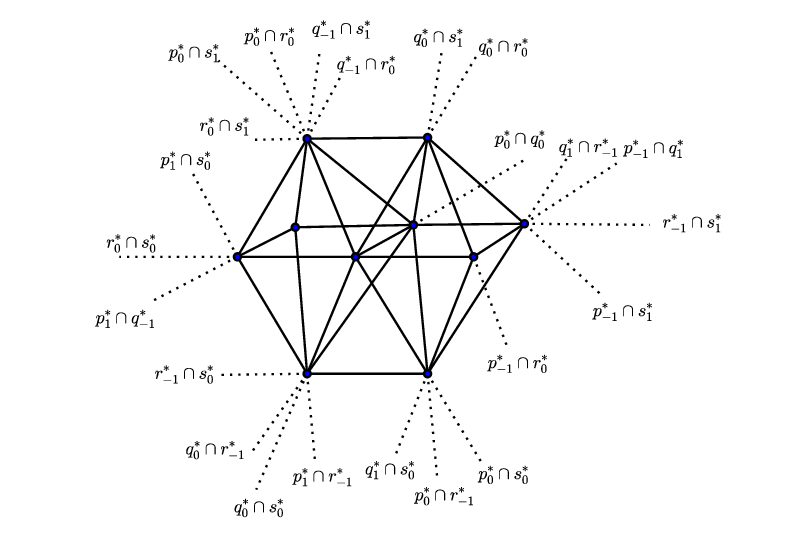}
\caption{The double diamond structure around a rather good edge in the solid $t_0^*$. Each line should be read as intersect $t_0^*$, so, for example, the line $p_1^*\cap q_{-1}^*$ in the figure is the line $p_1^*\cap q_{-1}^*\cap t_0^*$.}
\label{doublediamond}
\end{figure}

Applying the same lemma to $p_0^*$ and $q_0^*$, we conclude that the hyperplanes $p_i^*$, $q_j^*$, $t_k^*$, $s_l^*$, $r_m^*$ intersect in a point if and only if $i + j + k + l + m = 0$ for all $i,j,k \in \{-1,0,1\}$, $l \in \{0,1\}$ and $m \in \{-1,0\}$ unless $\{i,j,k\} \subseteq \{-1,1\}$.
\end{proof}

We have already mentioned the relationship between good and rather good edges and
$5$-cell grids. The objective of introducing
the grids in the first place was the idea that sets of points spanning
few ordinary hyperplanes should be rich in hyperplanes containing five points of $S$, and thus, should
contain $5$-cell grids.

One of the first results we can prove that indicates that this is the case is Lemma~\ref{bedges}, which indicates there are few bad edges in the graph $\Gamma$.

\begin{lemma} \label{bedges}
Let $\mathcal{S}$ be a set of $n$ points in $\mathbb{P}^4({\mathbb R})$ not all in a hyperplane and such
that any $4$ points of $\mathcal{S}$ spans a hyperplane.  If $\mathcal{S}$ spans less
than $Kn^3$ ordinary hyperplanes then the graph $\Gamma = \Gamma(\mathcal{S})$ has at most $48Kn^3$ bad edges.
\end{lemma}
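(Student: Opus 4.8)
The plan is to bound the number of bad edges by a double-counting argument, relating bad edges to ordinary hyperplanes (and to hyperplanes incident with few points of $\mathcal{S}$ more generally), following the template of \cite[Lemma]{Ball2018} and the planar argument of Green--Tao. First I would fix notation: an edge of $\Gamma$ lies on a line $p^*\cap q^*\cap r^*$ for some triple $p,q,r\in\mathcal{S}$, and is a segment of that line cut out between two consecutive vertices; a vertex of $\Gamma$ on that line is a point $p^*\cap q^*\cap r^*\cap s^*$ for some fourth point $s$. Recall that an edge is good when every face containing it is a triangle and both its endpoints are incident with exactly $5$ hyperplanes of $\mathcal{S}^*$; so an edge fails to be good either because one of its endpoints is incident with $\neq 5$ hyperplanes, or because it borders a non-triangular face. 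I would handle these two sources separately.

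For the first source, observe that a vertex $v=p^*\cap q^*\cap r^*\cap s^*$ is incident with exactly $m$ hyperplanes of $\mathcal{S}^*$ precisely when the dual solid through the corresponding four (or more) points of $\mathcal{S}$ contains exactly $m$ points; in particular $v$ is incident with exactly $5$ hyperplanes iff that solid is a $5$-point solid, and $v$ is incident with exactly $4$ hyperplanes iff that solid is an \emph{ordinary} solid. A vertex incident with exactly $4$ hyperplanes $p^*,q^*,r^*,s^*$ is an endpoint of edges lying on the four lines $p^*\cap q^*\cap r^*$, $p^*\cap q^*\cap s^*$, $p^*\cap r^*\cap s^*$, $q^*\cap r^*\cap s^*$, and on each such line it is the endpoint of exactly two edges, so it is the endpoint of $8$ edges. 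Since each ordinary solid gives one such vertex, the number of (edge, endpoint) incidences with a $4$-fold vertex is at most $8Kn^3$. Vertices incident with $\geq 6$ hyperplanes must be controlled by a convexity/counting estimate on each line: on a fixed line $p^*\cap q^*\cap r^*$ the vertices correspond to the other points $s$ with $p^*\cap q^*\cap r^*\cap s^*\neq\emptyset$, there are at most $n-3$ of them, hence at most $n-3$ edges on that line and at most $\binom{n}{3}$ lines, but I need the sharper observation (as in \cite{Ball2018}) that the number of vertices of multiplicity $\geq 6$ summed over all lines is itself bounded by a constant times the number of ordinary solids, because a high-multiplicity vertex forces many ordinary solids nearby via the same grid-local analysis used for Lemma \ref{rathergoodedge}. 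The second source, non-triangular faces, is treated in each plane $p^*\cap q^*$ using \cite[Lemma 5]{Ball2018}: in a plane the graph is a planar subdivision, and an edge bordering a non-triangular face, or an edge at a vertex of the plane graph with the wrong local degree, can be charged to an ordinary solid in that plane by the planar Green--Tao estimate; summing over the $\binom{n}{2}$ planes and being careful not to overcount (each ordinary solid lies in only $\binom{4}{2}=6$ of the planes under consideration) yields the bound.

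Combining: each ordinary solid is charged a bounded number of times as an endpoint-of-bad-edge (at most $8$ from the $4$-fold vertex count) and a bounded number of times from the planar non-triangular-face count (at most $6$ planes, each contributing a bounded local count), and each edge counted as bad is witnessed by at least one such charge; tallying the constants gives at most $48Kn^3$ bad edges. I would then note the constant $48$ comes from adding the contribution $8$ per ordinary solid from degree-defect endpoints, doubled because an edge has two endpoints, plus the $6$-planes contribution with its own local multiplicity, and that no attempt is made to optimise it.

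The main obstacle I expect is the bookkeeping that prevents overcounting while keeping the final constant an honest explicit number: a single bad edge can be bad for several reasons at once and can be ``seen'' from several planes and several lines, so the charging scheme must assign to each bad edge at least one ordinary solid in a way that no ordinary solid is assigned more than a controlled number of times. Making the high-multiplicity-vertex estimate rigorous --- i.e. showing that vertices incident with $\geq 6$ hyperplanes cannot be too numerous without creating $\Omega(n^3)$ ordinary solids --- is the delicate point, and it is exactly where the local $5$-cell-grid structure around a rather good edge (Lemma \ref{rathergoodedge}) has to be run in reverse: a cluster of high-multiplicity vertices not accompanied by the expected grid forces ordinary solids, and one must quantify this. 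The planar input \cite[Lemma 5]{Ball2018} and the three-dimensional slicing \cite[Lemma 6]{Ball2018} do most of the geometric work; the remaining task is purely combinatorial accounting.
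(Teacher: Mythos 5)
Your decomposition of badness into two sources (an endpoint incident with $\neq 5$ hyperplanes, or a non-triangular face), and your final accounting ($8$ edges per ordinary vertex, hence $8v_4\leq 8Kn^3$ from that term, plus the two remaining contributions summing to $48Kn^3$), match the shape of the paper's argument. But the proposal has a genuine gap at exactly the point you flag as delicate: you never actually establish that the number of high-multiplicity vertices (those incident with $\geq 6$ hyperplanes of $\mathcal{S}^*$) and the number of edges on non-triangular faces are both $O(v_4)=O(Kn^3)$. Your proposed mechanism --- that ``a high-multiplicity vertex forces many ordinary solids nearby via the same grid-local analysis used for Lemma \ref{rathergoodedge}, run in reverse'' --- is not how this works and would not succeed: there is no local argument deducing ordinary solids from a single vertex of multiplicity $6$; the constraint is global and topological, not local.

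The missing idea is a Melchior-type inequality obtained by summing Euler's formula over all $\binom{n}{2}$ planes $\pi\cap\pi'$ in the dual. Writing $v_i$ for the number of vertices incident with exactly $i$ hyperplanes and $f_j$ for the number of faces with $j$ edges, one uses that each face lies in exactly one such plane, each edge in exactly $3$, and each vertex of multiplicity $i$ in $\binom{i}{2}$, to get
\begin{equation*}
\sum_{i\geq 4}\tbinom{i}{2}v_i-3|E|+|F|=\tbinom{n}{2}.
\end{equation*}
Combining this with the incidence counts $6|E|=\sum_j jf_j$ and $2|E|=\sum_i 2\binom{i}{3}v_i$ eliminates $|E|$ and $|F|$ and yields the identity
\begin{equation*}
12v_4=3n(n-1)+2\sum_{j\geq 4}(j-3)f_j+\sum_{i\geq 6}i(i-1)(i-5)v_i,
\end{equation*}
in which every term on the right is non-negative. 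It is this identity, together with $v_4\leq Kn^3$, that gives $\sum_{j\geq 4}jf_j\leq 24Kn^3$ and $\sum_{i\neq 5}\frac{i(i-1)(i-2)}{3}v_i\leq 24Kn^3$, whence the $48Kn^3$ bound. Without it, neither of your two charges can be justified, and no appeal to \cite[Lemma 5]{Ball2018} or \cite[Lemma 6]{Ball2018} (which concern the local structure around good edges, used later in Lemma \ref{rathergoodedge}) substitutes for it.
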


\begin{proof}
Let $V$ and $E$ be the set of vertices and edges of the graph $\Gamma$ respectively.
 
 Let $\pi$ and $\pi'$ be two elements of $\mathcal{S}^*$. As these are hyperplanes of 
 $\mathbb{P}^4({\mathbb R})$, the intersection $\pi \cap \pi'$ is a plane.
 
 Let us call $V_{\pi,\pi'}$ the set of vertices of $\Gamma$ which are in both
 $\pi$ and $\pi'$, and let us define in the same way the set $E_{\pi, \pi'}$ of
 edges in the intersection. Let us also consider the set of faces $F_{\pi, \pi'}$
 in the plane $\pi \cap \pi'$ defined by these vertices and edges. As $\pi \cap \pi'$
 is a projective plane, Euler's formula is
 
 \begin{equation} \label{Eu2}
  |V_{\pi, \pi'}| - |E_{\pi, \pi'}| + |F_{\pi, \pi'}| = 1.
 \end{equation}
 
 Let us call $F$ the set of faces in the graph $\Gamma$, which we will take as 
 the union of the faces defined in the planes $\pi \cap \pi'$ for all 
 $\pi, \pi' \in \mathcal{S}^*$, so
 
 \begin{equation*}
  F = \bigcup_{\pi, \pi' \in \mathcal{S}^*} F_{\pi, \pi'}.
 \end{equation*}
 
 Now, taking the Euler's formula \eqref{Eu2} into account, if we sum for all the
 choices of $\pi \cap \pi'$ in $\mathcal{S}^*$ we end up with
 
 \begin{equation} \label{Eu3}
  \sum_{i = 4}^n \frac{i(i-1)}{2}v_i - 3|E| + |F| = \frac{n(n-1)}{2},
 \end{equation}

 where the numbers $v_i$ represents the number of vertices in $\Gamma$ which are
 incident with exactly $i$ hyperplanes of $\mathcal{S}^*$. This formula follows since any face is contained in exactly one plane $\pi \cap \pi'$, a line in $\mathbb{P}^4({\mathbb R})$ is the intersection of $3$ hyperplanes (thus, any edge of $E$ is in exactly $3$ planes $\pi \cap \pi'$), and a vertex which is incident with exactly $i$ hyperplanes of $\mathcal{S}^*$ will be contained in $\frac{i(i-1)}{2}$ planes $\pi \cap \pi'$.

% Now we will use some counting arguments to derive some relations between the number of faces, edges and vertices of the graph.
 
Since each edge is contained in exactly three planes $\pi \cap \pi'$ and since in each of these planes, it lies between two faces, we obtain the following equation by counting pairs $(e, f)$ of edges and faces in two different ways,
 
 \begin{equation} \label{edgeface}
  \#(e,f) = 6|E| = \sum_{j \geqslant 3} jf_j,
 \end{equation}
 
 where $f_j$ denotes the number of faces in $F$ with $j$ edges.\\
 
Now we count pairs $(v, e)$ of edges and vertices in two different ways.
Note that for a vertex incident with exactly $i$ hyperplanes
 there are $\frac{i(i-1)(i-2)}{6}$ lines (which are intersection of three hyperplanes)
 incident with it. Therefore,
 
 \begin{equation} \label{vertexedge}
%  \begin{aligned}
 2|E| = \sum_{i \geqslant 4} 2\times\frac{i(i-1)(i-2)}{6} v_i. \\
%		   &\Downarrow \\
%	      6|E| &= \sum_{i \geqslant 4} i(i-1)(i-2)v_i
%  \end{aligned}
 \end{equation}
 
 Now we will use these equations and the fact that $v_4 \leqslant Kn^3$ (since $v_4$
 represents the number of ordinary hyperplanes), to obtain the bound for the number
 of bad edges in $\Gamma$.
 
 We can combine the Euler's formula \eqref{Eu3} with the equations \eqref{edgeface} and
 \eqref{vertexedge} to obtain the following:
 
 \begin{equation*}
  \begin{aligned}
   |F| - 3|E| + \sum_{i \geqslant 4}\frac{i(i-1)}{2}v_i = \frac{n(n-1)}{2} ,	\\
   6|F| - 18|E| + 3\sum_{i \geqslant 4}i(i-1)v_i = 3n(n-1) 			,\\
   6|F| - \left[2\sum_{j \geqslant 3}jf_j\right] -
   \left[\sum_{i \geqslant 4}i(i-1)(i-2)v_i\right] + 
   3\sum_{i \geqslant 4}i(i-1)v_i = 3n(n-1) 					,\\
   -2\sum_{j \geqslant 3}(j-3)f_j - \sum_{i \geqslant 4}i(i-1)(i-5)v_i = 3n(n-1),
  \end{aligned}
 \end{equation*}
 \begin{equation} \label{v4_1}
  12v_4 = 3n(n-1) + 2\sum_{j \geqslant 4}(j-3)f_j + \sum_{i \geqslant 6}i(i-1)(i-5)v_i.
 \end{equation}
 
 We can use this last equation \eqref{v4_1} to obtain some bounds, since every
 summand on the right hand side is non-negative. 
 
 First we deduce that for the faces  
 
 \begin{equation*} 
  \begin{aligned}
   \sum_{j \geqslant 4}(j-3)f_j &\leq 6v_4 &\leq 6Kn^3 ,		\\
   \sum_{j \geqslant 4}f_j &\leq \sum_{j \geqslant 4}(j-3)f_j &\leq 6Kn^3 ,\\
     \end{aligned}
     \end{equation*}
   \begin{equation}  \label{fromface}
   \sum_{j \geqslant 4}jf_j \leq 6Kn^3 + 18Kn^3 = 24Kn^3.
 \end{equation}

 This gives us a bound for the number of edges incident with a face which is not a 
 triangle. Now, if we look at the vertices:
 
 \begin{equation*}
  \begin{aligned}
   \sum_{i \geqslant 6}i(i-1)(i-5)v_i &\leq 12v_4 &\leq 12Kn^3			\\
   \sum_{i \geqslant 6}i(i-1)v_i &\leq \sum_{i \geqslant 6}i(i-1)(i-5)v_i &\leq 12Kn^3	\\
   \sum_{i \geqslant 6}i(i-1)(i-2)v_i &\leq 12Kn^3 + 36Kn^3 &= 48Kn^3.
  \end{aligned}
 \end{equation*}
 
 From this last equation we can bound for the number of edges incident with a
 vertex which is not incident with exactly $5$ hyperplanes,
 
 \begin{equation} \label{fromvert}
  \sum_{i \neq 5}\frac{i(i-1)(i-2)}{3}v_i = 8v_4 + \sum_{i \geqslant 6}\frac{i(i-1)(i-2)}{3}v_i
  \leq 8Kn^3 + \frac{48Kn^3}{3} = 24Kn^3.
 \end{equation}
 
 Now, an edge can be bad if either it is contained in some non-triangular face or if
 one of its end vertices is not incident with exactly $5$ hyperplanes (i.e. the vertex
 has degree different than $20$). Therefore, if we join the two equations
 \eqref{fromface} and \eqref{fromvert}, we obtain the following bound
 
 \begin{equation}
  \#\textrm{Bad Edges} \leq 24Kn^3 + 24Kn^3 = 48Kn^3.
 \end{equation}
\end{proof}

\begin{lemma} \label{sbedges}
 The number of slightly bad edges is at most $1872Kn^3$.
%  Let $S$ be a set of $n$ points in $\mathbb{P}^4(\mathbb{R})$ not all in an hyperplane and such
%  that any subset of $4$ points span a hyperplane. Let us assume that $S$ spans less
%  than $Kn^3$ ordinary hyperplanes. Then in the graph $\Gamma$ there are at most $1872Kn^3$
%  slightly bad edges.
\end{lemma}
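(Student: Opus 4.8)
The plan is to leverage Lemma~\ref{bedges} directly, since every slightly bad edge is either bad itself or a good edge that is spoiled by a nearby bad edge. First I would unwind the definitions: an edge fails to be rather good precisely when it is a bad edge, or it is a good edge one of whose two end vertices carries some bad edge. Writing $B$ for the set of bad edges and $G$ for the set of good edges, the number of slightly bad edges is therefore at most $|B|$ plus the number of edges of $G$ having an end vertex incident with an edge of $B$. By Lemma~\ref{bedges}, $|B|\le 48Kn^3$, so it remains to bound the second quantity.

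Next I would use the local structure at an end vertex of a good edge. Such a vertex is incident with exactly five hyperplanes of $\mathcal{S}^*$, so the lines of $\Gamma$ through it are the $\binom{5}{3}=10$ intersections of three of these five hyperplanes, and each such line contributes two edges meeting at the vertex; hence the vertex has degree $20$, in agreement with the edge-count \eqref{vertexedge}. Consequently, if a degree-$20$ vertex $v$ is incident with at least one bad edge, then at most $19$ of the $20$ edges at $v$ can be good. Let $W$ be the set of degree-$20$ vertices incident with at least one bad edge; counting incidences between the edges of $B$ and their two end vertices gives $|W|\le 2|B|\le 96Kn^3$.

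Finally, every good edge that is slightly bad has at least one end vertex in $W$, so the number of such edges is at most $\sum_{v\in W}(\text{number of good edges at }v)\le 19|W|\le 19\cdot 96Kn^3=1824Kn^3$. Adding the bad edges themselves, the total number of slightly bad edges is at most $48Kn^3+1824Kn^3=1872Kn^3$, as required. The only place that needs care is the bookkeeping at a degree-$20$ vertex — namely that a good edge genuinely forces both of its end vertices to be incident with exactly five hyperplanes, and that a single bad edge touches at most two such vertices, each spoiling at most $19$ good edges — but there is no analytic difficulty here; the statement is a purely combinatorial consequence of Lemma~\ref{bedges}.
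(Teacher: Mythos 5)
Your proposal is correct and follows essentially the same argument as the paper: decompose the slightly bad edges into the bad edges (at most $48Kn^3$ by Lemma~\ref{bedges}) plus the good edges spoiled at a degree-$20$ end vertex, each bad edge spoiling at most $2\times 19$ good edges, giving $48Kn^3 + 2\cdot 19\cdot 48Kn^3 = 1872Kn^3$. Your bookkeeping via the vertex set $W$ is just a slight repackaging of the paper's count.
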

\begin{proof}
 This is quickly proven by counting. A slightly bad edge is either a bad edge itself
 or a good edge which is incident with a bad edge. We know from Lemma \ref{bedges}
 that the number of bad edges in $\Gamma$ is at most $48Kn^3$. On the other hand,
 we know that the end vertices of a good edge have degree $20$ (since they are
 incident to $5$ hyperplanes of $\mathcal{S}^*$). Thus, we can bound the number of slightly
 bad edges by the following rough estimation:
 
 \begin{equation}
  \#\textrm{Slightly Bad Edges} \leq 48Kn^3 + 2 \times (20-1) \times 48Kn^3 = 1872Kn^3.
 \end{equation}

\end{proof}

\section{Intersection of five quadrics}

We have defined in the previous section the concepts of $5$-cell grid and good/bad
edges and how they relate to our problem. In keeping with the work
of Green and Tao \cite{GT2013}, we now study if sets of points which dualise into $5$-cell
grids are contained in a certain variety and, in that case, deduce what is the nature of
that variety.

The answer to that question is that, indeed, sets of points which dualise in
$\mathbb{P}^4({\mathbb R})$ into $5$-cell grids are contained the intersection of $5$ quadrics.

In this section we will prove this, along with some other results that we will need
later about these varieties. 

We say that a quadric is {\em degenerate} at a point $p$, if for every point $x$ on the quadric the entire line joining $p$ and $x$ is contained in the quadric. Equivalently, if $p$ is the point $(1:0:0:0:0)$ with respect to some basis, the quadratic form defining the quadric has no $X_1$ term.
 
Before studying the characteristics of the variety, we prove that the sets of points forming the $5$-cell grid are contained in the intersection of five quadrics. For this we will need two lemmas, the first of which is the following.

\begin{lemma} \label{11pts}
 Let $S=\{p_0, p_1, q_0, q_1, r_{-1}, r_0, s_{-1}, s_0, t_{-1}, t_0, t_1\}$ be eleven
 points of $\mathbb{P}^4({\mathbb R})$ such that $p_i, q_j, r_k, s_l, t_m$ are contained in a
 hyperplane if and only if $i + j + k + l + m = 0$. Then there are five linearly
 independent quadrics that contain the eleven points of $S$ and six linearly
 independent quadrics that contain the ten points of $S \setminus \{t_0\}$.
\end{lemma}
\begin{proof}
We can choose a basis so that 
 \begin{equation*}
  \begin{aligned}
  p_0&=(1:0 :0:0:0) ,\ \ p_1&=&(a_1:a_2:a_3:a_4:a_5), \\
q_0&=(0:1:0:0:0),\ \ q_1&=&(b_1:b_2:b_3:b_4:b_5), \\
r_{-1}&=(0 :0 :1:0:0),\ \ r_{0}&=&(c_1:c_2:c_3:c_4:c_5), 
\\
s_{-1}&=(0:0:0:1:0),\ \ s_{0}&=&(d_1:d_2:d_3:d_4:d_5), 
\\
t_0&=(0:0:0:0:1). \ \ & &
  \end{aligned}
 \end{equation*}
 
 Each of the following subsets of five points are contained in a hyperplane 
 $$
 \{p_0,q_0,r_0,s_0,t_0\}, \{p_1,q_0,r_{-1},s_0,t_0\} , \{p_1,q_0,r_0,s_{-1},t_0\}, 
  $$
  $$
 \{p_1,q_1,r_{-1},s_{-1},t_0\}, \{p_0,q_1,r_{0},s_{-1},t_0\} , \{p_0,q_1,r_{-1},s_{0},t_0\} ,
  $$
  which implies that
  $$
  d_4c_3=c_4d_3, \ b_1a_2=a_1b_2, \ a_4d_1=a_1d_4,
  $$
    $$
  b_3c_2=c_3b_2, \ a_3c_1=c_3a_1, \ d_4b_2=b_4d_2.
  $$
  Since $t_1$ is the intersection of the hyperplanes spanned by the following four sets of points
  $$
   \{p_1,q_0,r_{-1},s_{-1} \}, \{p_0,q_0,r_{0},s_{-1}\} , \{p_0,q_0,r_{-1},s_{0}\},  \{p_0,q_1,r_{-1},s_{-1}\}, 
  $$
  we can calculate the coordinates of $t_1$. Likewise, we can calculate the coordinates of the points $t_{-1}$, since it is the intersection of the hyperplanes spanned by the four sets of points
  $$
     \{p_0,q_{1},r_{0},s_{0} \}, \{p_1,q_1,r_{-1},s_{0}\} , \{p_1,q_1,r_{0},s_{-1}\},  \{p_1,q_0,r_{0},s_{0}\}.
  $$
 
Let $\mathrm{M}$ denote the $11 \times 15$ matrix whose rows are indexed by the points of $S$ and whose columns are indexed by pairs $(i,j)$ where $1 \leqslant i \leqslant j \leqslant 5$. The entry in the row indexed by $x$ and the column indexed by the pair $(i,j)$ is $x_ix_j$.

With the aid of  a computer algebra package, we deduce that the matrix $\mathrm{M}$ has rank at most $10$. Therefore, the right-kernel of $\mathrm{M}$ has dimension at least $5$, which corresponds to the quadrics which are zero at the points of $S$. If we remove the row corresponding to the point $t_0$, the revised matrix has rank at most $9$, so the right-kernel has dimension at least $6$, which corresponds to the quadrics which are zero at the points of $S \setminus \{t_0\}$.
\end{proof}

Notice that the space of quadrics in $\mathbb{P}^4({\mathbb R})$ has dimension ${6 \choose 2} = 15$ and that, in a generic situation, each point we force our quadrics to contain 
should impose a linearly independent condition on the space of quadrics. Thus, 
in that general situation we would expect the space of quadrics passing through $11$ 
points to be of dimension $4$ and the space of quadrics passing through $10$ points 
to be of dimension $5$. However, in Lemma \ref{11pts} we proved that there are $6$ linearly
independent quadrics passing through the $10$ points used in the proof, and $5$
quadrics through the whole set of $11$ points. This constitutes one more dimension
than what we would expect in a general situation.\\

From this lemma we obtain the variety that we were looking for, the intersection
of $5$ linearly independent quadrics. In order to extend this result to the entire 
$5$-cell grid, we will need a further result, which is closely related to the previous proof.

\begin{lemma} \label{10points}
 Let $\{p_0, p_1, q_0, q_1, r_{-1}, r_0, s_{-1}, s_0, t_{-1}, t_1\}$ be $10$ points
 on $\mathbb{P}^4(\mathbb{R})$ such that $p_i,$ $q_j$, $r_k$, $s_{l}$, $t_m$ are contained in a
 hyperplane if and only if $i + j + k + l + m = 0$. Then any quadric that contains
 $9$ of the $10$ points must also contain the tenth one.
\end{lemma}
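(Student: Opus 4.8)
The plan is to build on Lemma~\ref{11pts} and a symmetry argument. Label the tenth point we wish to recover as $t_{-1}$, so that we are given a quadric $Q$ passing through the nine points $\{p_0,p_1,q_0,q_1,r_{-1},r_0,s_{-1},s_0,t_1\}$ and must show $t_{-1}\in Q$. First I would introduce exactly the fourteen hyperplanes $H_{j1},H_{j2}$ and the seven hyperplane-pair quadrics $Q_j=H_{j1}\cup H_{j2}$ from the proof of Lemma~\ref{11pts}, together with the involution $\gamma$ exchanging $p_0\leftrightarrow p_1$, $q_0\leftrightarrow q_1$, $r_{-1}\leftrightarrow r_0$, $s_{-1}\leftrightarrow s_0$, $t_{-1}\leftrightarrow t_1$. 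As observed there, $\gamma$ swaps $H_{j1}$ and $H_{j2}$ for each $j$, hence fixes every $Q_j$, and moreover $t_{-1}=H_{41}\cap H_{51}\cap H_{61}\cap H_{71}$ while $t_1=H_{42}\cap H_{52}\cap H_{62}\cap H_{72}$, so $\gamma$ is determined purely by the eight points $p_0,p_1,q_0,q_1,r_{-1},r_0,s_{-1},s_0$.

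Next I would analyse the space $\mathcal{V}$ of all quadrics through the ten points. From Lemma~\ref{11pts} (applied with $t_0$ removed, or rather re-deriving the intermediate conclusion of that proof) we know $\{W_1,W_2,Q_4,Q_5,Q_6,Q_7\}$ is a $6$-dimensional space of quadrics through $\{p_*,q_*,r_*,s_*,t_{-1},t_1\}$; in particular $\dim\mathcal{V}\ge 6$. The key point is that each of these six basis quadrics is $\gamma$-invariant. I would then argue that \emph{every} quadric through these ten points is $\gamma$-invariant: indeed the eight points $p_0,p_1,q_0,q_1,r_{-1},r_0,s_{-1},s_0$ together with $t_1$ already span $\mathbb{P}^4$ in a sufficiently generic way that a quadric is determined up to the known $6$-dimensional freedom, and $\gamma$ permutes the imposed conditions among themselves (it fixes the $9$-point set $\{p_*,q_*,r_*,s_*,t_1\}\cup\{t_{-1}\}$), so $\gamma$ acts on $\mathcal{V}$; since $\gamma$ is an involution, $\mathcal{V}$ splits into $+1$ and $-1$ eigenspaces, and one shows the $-1$ eigenspace is trivial because a $\gamma$-anti-invariant quadric would have to vanish on the fixed locus of $\gamma$, which (being a linear automorphism swapping pairs of hyperplanes) contains a plane spanned by points in general position with respect to a quadric — forcing the anti-invariant part to vanish identically. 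Hence $\mathcal{V}$ consists entirely of $\gamma$-invariant quadrics.

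Finally, given the hypothesised quadric $Q$ through the nine points $\{p_0,p_1,q_0,q_1,r_{-1},r_0,s_{-1},s_0,t_1\}$, I would note that $Q$ (or a suitable member of the pencil $Q+\lambda Q_j$ if one needs to also kill $t_0$-type points, but here no such adjustment is needed since $t_0$ is not among our ten) lies in $\mathcal{V}$ \emph{once we know it passes through $t_{-1}$} — which is precisely what we want, so instead: $Q$ passes through the nine listed points and through $t_1$, and since $\gamma$ fixes this nine-point set and is determined by the eight non-$t$ points, $\gamma(Q)$ also passes through all nine; then $Q$ and $\gamma(Q)$ pass through a common $9$-point configuration, and by the dimension count a quadric through those nine points is determined up to the $\gamma$-invariant family, forcing $\gamma(Q)=Q$ up to the invariant freedom — more cleanly, $\tfrac12(Q+\gamma(Q))$ is a $\gamma$-invariant quadric agreeing with $Q$ on those nine points, and $\tfrac12(Q-\gamma(Q))$ is anti-invariant and vanishes on the nine points, hence (by the triviality of the anti-invariant eigenspace established above, noting the nine points impose enough conditions) $Q=\gamma(Q)$; therefore $Q$ is $\gamma$-invariant, and since $\gamma(t_1)=t_{-1}$, we conclude $t_{-1}=\gamma(t_1)\in\gamma(Q)=Q$.

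The main obstacle I anticipate is the dimension bookkeeping that makes the anti-invariant eigenspace argument rigorous: one must verify that the relevant point configurations (the eight or nine points, and the fixed locus of $\gamma$) are in sufficiently general position relative to quadrics that no stray anti-invariant quadric sneaks in, and that the $\gamma$-action on the space of quadrics through the ten points is genuinely well-defined (i.e.\ that $\gamma$ is a projective linear map, which follows from its description as an intersection-of-hyperplanes involution). Everything else is the same hyperplane-pair and symmetry machinery already deployed in Lemma~\ref{11pts}.
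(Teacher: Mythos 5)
Your approach has genuine gaps, the most serious of which is a circularity at the final step. You assert that $\gamma$ ``fixes this nine-point set'' $\{p_0,p_1,q_0,q_1,r_{-1},r_0,s_{-1},s_0,t_1\}$, but it does not: $\gamma(t_1)=t_{-1}$, so $\gamma(Q)$ is only known to vanish on the eight points $p_*,q_*,r_*,s_*$ together with $t_{-1}$. Consequently $\tfrac12\bigl(Q-\gamma(Q)\bigr)$ vanishes on those eight points, but its value at $t_1$ is $\tfrac12\bigl(Q(t_1)-Q(t_{-1})\bigr)=-\tfrac12 Q(t_{-1})$, which is exactly the quantity you are trying to show is zero; the symmetrisation therefore does not close. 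Two further problems: the claim that the anti-invariant eigenspace is trivial is unsupported --- for an involution of $\mathbb{P}^4$ the $(-1)$-eigenspace of the induced action on the $15$-dimensional space of quadrics is typically large (for the swap $X_1\leftrightarrow X_2$ it contains $X_1^2-X_2^2$, $X_1X_3-X_2X_3$, and so on), and a quadric vanishing on the fixed plane of $\gamma$ need not vanish identically (consider $X_1X_2$ on the plane $X_1=X_2=0$). Moreover the existence of $\gamma$ as a projective \emph{linear} involution realising the prescribed permutation of ten points is asserted rather than proved; without it, $\gamma(Q)$ is not even defined.

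There is also a false reduction at the outset: ``label the tenth point as $t_{-1}$'' is not without loss of generality. The incidence structure is not transitive on the ten points --- $t_{-1}$ lies on four of the eight spanned hyperplanes and shares three of them with $p_1$, whereas $p_1$ shares exactly two with each of $q_0,q_1,r_{-1},r_0,s_{-1},s_0$ --- so the cases where the omitted point is one of $p_*,q_*,r_*,s_*$ are combinatorially different, and your $\gamma$-argument (which only exchanges $t_{-1}$ with $t_1$) says nothing about them. The paper's proof avoids all of this: it shows directly that any nine of the ten points impose nine linearly independent conditions on the $15$-dimensional space of quadrics (taking five of them as a coordinate simplex and extracting a rank-$4$ minor from the $X_iX_5$ columns, using that $p_1,q_1,r_0,s_0,t_{-1}$ are linearly independent because their indices do not sum to $0$), so the quadrics through any nine form a $6$-dimensional space, which by the proof of Lemma~\ref{11pts} already coincides with the space of quadrics through all ten. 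To salvage your route you would need both a non-circular comparison of $Q$ with $\gamma(Q)$ and a separate argument for the eight non-$t$ omissions.
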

\begin{proof}
 As mentioned above, the expected dimension of the space of quadrics passing through
 the $10$ points should be $5$, instead of having the $6$ linearly independent
 quadrics we proved in Lemma \ref{11pts}.
 
We want to prove that the space of quadrics through any subset of $9$ points
 of these $10$ has dimension $6$, as one would expect. This will imply that
 the $6$ linearly independent quadrics passing through any $9$ points are the same
 as the ones going through the $10$ points, and thus, any quadric going through
 $9$ points will contain the tenth point of this set.
 
 For this we prove that any subset of $9$ points of these impose
 $9$ linearly independent conditions on the space of quadrics. \\
 
 Without loss of generality, let us suppose that the subset of $9$ points we
 choose is the one without the point $t_1$. We can choose an appropriate basis so that
 the points have the following coordinates:
 
 \begin{equation*}
  \begin{aligned}
  p_0&=(1:0 :0:0:0) ,\ \ p_1&=&(a_1:a_2:a_3:a_4:a_5), \\
q_0&=(0:1:0:0:0),\ \ q_1&=&(b_1:b_2:b_3:b_4:b_5), \\
r_0&=(0 :0 :1:0:0),\ \ r_{-1}&=&(c_1:c_2:c_3:c_4:c_5), 
\\
s_{0}&=(0:0:0:1:0),\ \ s_{-1}&=&(d_1:d_2:d_3:d_4:d_5), 
\\
t_{-1}&=(0:0:0:0:1). \ \ & &
  \end{aligned}
 \end{equation*}
 
 %Now let us consider $\phi$ a generic quadric in $\mathbb{P}^4(\mathbb{R})$,
  
% \begin{equation*}
 % \phi(X) = \alpha_{11}X_1^2 + ... + \alpha_{55}X_5^2 + \alpha_{12}X_1X_2 +
%	    ... + \alpha_{45}X_4X_5.
 %\end{equation*}
 
% To impose that the quadric $\phi$ contains a certain point $x$ is the same as
 %to impose the equation $\phi(x) = 0$. Now, if we want to impose that this
% quadric contains the $9$ points mentioned above, we obtain the following
 %system of equations:
 
 As in the proof of Lemma~\ref{11pts}, the quadrics which are zero on 
 $$
 \{p_0, p_1, q_0, q_1, r_{-1}, r_0, s_{-1}, s_0, t_{-1}\}
 $$
 are given by the non-zero elements of the right-kernel of the matrix
 \begin{equation*}
\mathrm{M}=\left(  \begin{array}{ccccccccccccccc}
   1 & 0 & 0 & 0 & 0 & 0 & 0 & 0 & 0 & 0 & 0 & 0 & 0 & 0 & 0 \\
   0 & 1 & 0 & 0 & 0 & 0 & 0 & 0 & 0 & 0 & 0 & 0 & 0 & 0 & 0 \\
   0 & 0 & 1 & 0 & 0 & 0 & 0 & 0 & 0 & 0 & 0 & 0 & 0 & 0 & 0 \\
   0 & 0 & 0 & 1 & 0 & 0 & 0 & 0 & 0 & 0 & 0 & 0 & 0 & 0 & 0 \\
   0 & 0 & 0 & 0 & 1 & 0 & 0 & 0 & 0 & 0 & 0 & 0 & 0 & 0 & 0 \\
   a_1^2 & a_2^2 & a_3^2 & a_4^2 & a_5^2 & a_1a_2 & a_1a_3 & a_1a_4 & a_1a_5
   & a_2a_3 & a_2a_4 & a_2a_5 & a_3a_4 & a_3a_5 & a_4a_5 \\
   b_1^2 & b_2^2 & b_3^2 & b_4^2 & b_5^2 & b_1b_2 & b_1b_3 & b_1b_4 & b_1b_5
   & b_2b_3 & b_2b_4 & b_2b_5 & b_3b_4 & b_3b_5 & b_4b_5 \\
   c_1^2 & c_2^2 & c_3^2 & c_4^2 & c_5^2 & c_1c_2 & c_1c_3 & c_1c_4 & c_1c_5
   & c_2c_3 & c_2c_4 & c_2c_5 & c_3c_4 & c_3c_5 & c_4c_5 \\
   d_1^2 & d_2^2 & d_3^2 & d_4^2 & d_5^2 & d_1d_2 & d_1d_3 & d_1d_4 & d_1d_5 
   & d_2d_3 & d_2d_4 & d_2d_5 & d_3d_4 & d_3d_5 & d_4d_5
  \end{array}\right).
 \end{equation*}
 
If we can prove the matrix $\mathrm{M}$ has rank $6$ then the space of quadrics which are zero on the nine points has dimension $6$.
 
 The five points corresponding to the basis of $\mathbb{P}^4(\mathbb{R})$ implies immediately that the matrix has
 rank at least $5$. We only need to prove that the $4\times10$ sub-matrix
 corresponding to the last four rows has rank $4$.
 
 The key ingredient to prove this comes from the nature of the set we are
 dealing with. We will use the fact that the points $p_i, q_j, r_k, s_{l}, t_m$ are in a hyperplane if and only
 if $i + j + k + l + m = 0$. \\
 
 First of all, we can suppose that none of the coefficients $a_5, b_5, c_5, d_5$
 are $0$. This is because, if any of them were, that point would be contained in
 the hyperplane spanned by $p_0, q_0, r_0, s_0$ and by hypothesis they are not.
 
 Secondly, we know that the $5$ points $p_1, q_1, r_{-1}, s_{-1}$ and $t_{-1}$
 are not contained in an hyperplane, as their indices sum to $-1$. This means
 that the $5$ points are linearly independent and that the matrix
 
$$
  \left(\begin{array}{ccccc}
    0 & 0 & 0 & 0 & 1 \\
    a_1 & a_2 & a_3 & a_4 & a_5 \\
    b_1 & b_2 & b_3 & b_4 & b_5 \\
    c_1 & c_2 & c_3 & c_4 & c_5 \\
    d_1 & d_2 & d_3 & d_4 & d_5
  \end{array}\right)
$$

 has rank $5$.
 
 Now, since we know that the coefficients $a_5, b_5, c_5, d_5$ are different
 from $0$, we can scale the points to make them $1$ and get rid of the $X_5$
 coefficients. After doing that, looking at the sub-matrix given by the columns
 corresponding to the coefficients of $X_1X_5$, $X_2X_5$, $X_3X_5$ and $X_4X_5$,
 we get the following matrix
 
$$
  \left(\begin{array}{ccccc}
    a_1 & a_2 & a_3 & a_4 \\
    b_1 & b_2 & b_3 & b_4 \\
    c_1 & c_2 & c_3 & c_4 \\
    d_1 & d_2 & d_3 & d_4 
  \end{array}\right)
$$
 
 which we know has rank $4$. 
 
 Hence, the rank of $\mathrm{M}$ is $9$. 
 
 If the point left out was not $t_1$ but one of the other points, we would need to use a different basis, but we would be able to apply the same reasoning and reach the same conclusion.
 
Since we have proven that there are $6$ linearly independent quadrics through
 each subset of $9$ points, we conclude that any quadric passing through a
 subset of $9$ points also contains the tenth point.
\end{proof}

In Lemma \ref{11pts} and Lemma \ref{10points} we have proved that some small
sets of points on a $5$-cell grid are contained in the intersection of $5$ linearly
independent quadrics. What we want to prove now, using these two results, is that 
the whole $5$-cell grid structure must be contained in the intersection of $5$ 
quadrics, which is the main theorem of this section.

In order to use these lemmas later, we will need to be careful and verify that the points
to which we apply them maintain the same structure as the points in the lemma. With this in mind, and to avoid complications later, we will present the sets of points in these
two lemmas as follows.

We will apply Lemma \ref{11pts} to the following set of 11 points.

\begin{table}[h!]
 \centering
 \begin{tabular}{|c|c|c|c|c|}
  \hline
   $p_1$ & $q_1$ &          &          & $t_1$    \\
  \hline
   $p_0$ & $q_0$ & $r_0$    & $s_0$    & $t_0$	  \\
  \hline
         &       & $r_{-1}$ & $s_{-1}$ & $t_{-1}$ \\
  \hline
 \end{tabular}
\end{table}

We will apply Lemma \ref{10points} to the following set of 10 points.

\begin{table}[h!]
 \centering
 \begin{tabular}{|c|c|c|c|c|}
  \hline
   $p_1$ & $q_1$ &          &          & $t_1$    \\
  \hline
   $p_0$ & $q_0$ & $r_0$    & $s_0$    &     	  \\
  \hline
         &       & $r_{-1}$ & $s_{-1}$ & $t_{-1}$ \\
  \hline
 \end{tabular}
\end{table}

The exact statement that we will prove is slightly different to what we have mentioned,
as we will be proving that a $5$-cell grid at the neighbourhood of a segment of rather
good edges is contained in the intersection of $5$ linearly independent quadrics. We
present the theorem in this way as it will be more useful to us later to state it like
this. However, one can see that the proof we present here can be easily adapted to take 
into account the whole grid.

\begin{theorem} \label{quadricsx}
Let $\mathcal{S}$ be a set of $n$ points in $\mathbb{P}^4({\mathbb R})$ not all in a hyperplane and such
that any $4$ points of $\mathcal{S}$ span a hyperplane.  Let $p, q, r \in \mathcal{S}$ be points of $\mathcal{S}$
 such that the line $p^* \cap q^* \cap r^*$ in $\Gamma$ contains a segment $T$ of $m$
 rather good edges. Then there are $5$ linearly independent quadrics such that they
 contain the points $p$, $q$ and $r$ and all the points $s$ such that $s^*$
 intersects $p^* \cap q^* \cap r^*$ in $T$.
\end{theorem}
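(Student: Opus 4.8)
The plan is to bootstrap from a single rather good edge of $T$ to the whole segment, using Lemmas~\ref{rathergoodedge}, \ref{11pts} and \ref{10points} in turn. First I would set up the grid. By Lemma~\ref{rathergoodedge} the graph $\Gamma$ looks like a $5$-cell grid near each rather good edge of $T$. The edges of $T$ are consecutive on the line $L=p^*\cap q^*\cap r^*$, and each of their end vertices, being the end vertex of a good edge, is incident with exactly five hyperplanes of $\mathcal{S}^*$; hence the local grids attached to two adjacent edges of $T$ agree on the shared vertex together with its five hyperplanes, and, by the repeated use of \cite[Lemma~6]{Ball2018} in the proof of Lemma~\ref{rathergoodedge}, they agree on their whole overlap. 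Patching them gives one $5$-cell grid $(p_i^*)_i,(q_j^*)_j,(r_k^*)_k,(s_l^*)_l,(t_m^*)_m$ defined over a range of indices wide enough to contain the configurations used below. After relabelling we may take $p=p_0$, $q=q_0$, $r=r_0$, so that $p_0^*,q_0^*,r_0^*$ are precisely the three hyperplanes containing $L$, and arrange that the $m+1$ vertices cut on $L$ by the edges of $T$ are $v_\ell=p_0^*\cap q_0^*\cap r_0^*\cap s_\ell^*\cap t_{-\ell}^*$ for $\ell=0,\dots,m$. Because each $v_\ell$ is incident with exactly five hyperplanes, and because no point of $\mathcal{S}$ other than $p,q,r$ has its dual containing $L$ (otherwise four points of $\mathcal{S}$ would be coplanar), the points $s$ whose dual meets $L$ inside $T$ are exactly $s_0,\dots,s_m$ and $t_0,t_{-1},\dots,t_{-m}$; together with $p,q,r$ these are the points to be placed on five quadrics.

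Next I would apply Lemma~\ref{11pts} to the translate $\mathcal{P}_1$ of the $11$-point table displayed above, positioned in the grid so that its two vertices on $L$ are the end vertices of the first edge of $T$. This produces five linearly independent quadrics $Q_1,\dots,Q_5$ through the eleven points of $\mathcal{P}_1$, in particular through $p_0=p$, $q_0=q$ and $r_0=r$; I fix these quadrics for the rest of the argument, so that linear independence is automatic at the end. Let $\mathcal{P}_\ell$ be the translate of the $11$-point table attached in the same way to the $\ell$-th edge of $T$. Since translation along $L$ fixes every $p_i^*$, $q_j^*$, $r_k^*$ and only shifts the $s$- and $t$-indices, the ``off-line'' points $p_0,p_1,q_0,q_1,r_0,r_{-1}$ of $\mathcal{P}_\ell$ do not depend on $\ell$, and $\bigcup_{\ell=1}^{m}\mathcal{P}_\ell$ contains $p,q,r$ together with all of $s_0,\dots,s_m,t_0,\dots,t_{-m}$. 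I would then prove by induction on $\ell$ that $Q_1,\dots,Q_5$ pass through every point of $\mathcal{P}_1\cup\dots\cup\mathcal{P}_\ell$, which for $\ell=m$ gives the theorem. In the inductive step one has to place the points of $\mathcal{P}_{\ell+1}$ that have not yet been accounted for onto the quadrics; for each such point I would exhibit a $10$-point subset of the grid carrying the incidence structure of the $10$-point table above, containing the new point together with nine points already known to lie on $Q_1,\dots,Q_5$, and then invoke Lemma~\ref{10points} to force every $Q_i$ through the new point.

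I expect this inductive step to be the main obstacle. One has to choose the sliding $10$-point windows so that, on the one hand, each is a faithful copy of the configuration in Lemma~\ref{10points} --- it is the ``contained in a hyperplane if and only if the indices sum to $0$'' pattern, not merely the number of points, that must match --- and, on the other hand, each window meets the already-covered set in exactly nine of its ten points, so that precisely one new point is absorbed per application; organising the order in which the new points are treated, and the corresponding windows, so that both requirements hold at once is the real combinatorial work of the proof. This in turn depends on the coherent patching of the per-edge $5$-cell grids, which is exactly where the hypothesis that \emph{all} of $T$ consists of rather good edges --- giving, at every vertex of $T$, two nested layers of local grid --- is used.
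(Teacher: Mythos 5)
Your proposal follows essentially the same route as the paper: Lemma~\ref{rathergoodedge} to obtain the $3\times3\times3\times(m+1)\times(m+1)$ grid around $T$, a single application of Lemma~\ref{11pts} at one end of the segment to fix the five linearly independent quadrics once and for all, and then repeated applications of Lemma~\ref{10points} to sliding ten-point windows that each meet the already-covered set in nine points, absorbing the remaining $s$- and $t$-points by induction along $T$. The inductive windows you defer as ``the real combinatorial work'' are precisely the two configurations the paper exhibits explicitly (one absorbing $t_{i+1}$, one absorbing $s_{-i-1}$, each checked to carry the index-sum-zero incidence pattern of Lemma~\ref{10points}), so the plan is sound and matches the paper's proof.
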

\begin{proof}
 First of all, because the segment $T$ is a segment of $m$ rather good edges, by Lemma~\ref{rathergoodedge}, we
 know that the structure of $\Gamma$ around $T$ is almost that of a $5$-cell grid of
 dimensions $3\times3\times3\times (m+1) \times (m+1)$. We clarify this {\em almost} below.
 
Because it has the structure of a $5$-cell grid, we can rename the points
 involved to be as follows:
 
 \begin{equation*}
  \begin{aligned}
			  p_{-1} \quad &p_0 \quad p_1 \\
			  q_{-1} \quad &q_0 \quad q_1 \\
			  r_{-1} \quad &r_0 \quad r_1 \\
   s_{-m} \quad ... \quad s_{-1} \quad &s_0 \\
				       &t_0 \quad t_1 \quad ... \quad t_m
  \end{aligned}
 \end{equation*}
 
 having $p = p_0$, $q = q_0$ and $r = r_0$. Lemma~\ref{rathergoodedge} implies that the hyperplanes $p_i^*$, $q_j^*$, $r_k^*$, $s_l^*$, $t_n^*$ intersect in a point if and only if $i + j + k + l + n = 0$ for all $i,j,k \in \{-1,0,1\}$, $l \in \{-m,\ldots,0\}$ and $n \in \{0,\ldots,m\}$ unless (possibly) $\{i,j,k\} \subseteq \{-1,1\}$.
 
 Now we apply the Lemmas~\ref{11pts} and Lemma~\ref{10points} in order to get
 our $5$ quadrics. It will be important when we use these lemmas that the $11$
 (respectively $10$) points we apply the lemmas to, hold the same structure as
 the points used in the lemmas and that one can form a bijection between the
 sets of points that maintains invariant the spanned hyperplanes of the set. \\
 
 Firstly we apply Lemma \ref{11pts} to the set:
 
\begin{table}[h!]
 \centering
 \begin{tabular}{|c|c|c|c|c|}
  \hline
   $t_1$ & $r_1$ &          &          & $p_1$    \\
  \hline
   $t_0$ & $r_0$ & $s_0$    & $q_0$    & $p_0$	  \\
  \hline
         &       & $s_{-1}$ & $q_{-1}$ & $p_{-1}$ \\
  \hline
 \end{tabular}
\end{table}
%  \begin{equation*}
% %   \{t_0, t_1, r_0, r_1, s_{-1}, s_0, q_{-1}, q_0, p_{-1}, p_0, p_1\}
%   \begin{aligned}
%    t_1 & r_1 &        &        & p_1    \\
%    t_0 & r_0 & s_0    & q_0    & p_0    \\
%        &     & s_{-1} & q_{-1} & p_{-1}
%   \end{aligned}
%  \end{equation*}
 
 It is clear that this set of points holds the same structure as that of the set 
 in Lemma~\ref{11pts}. So Lemma \ref{11pts} applies and we know that there are $5$ linearly
 independent quadrics containing the $11$ points. 
 
 Let us denote by $\{Q_1, Q_2, Q_3, Q_4, Q_5\}$ this set of five quadrics.\\
 
Consider the set of $10$ points.

%\begin{table}[h]
 \begin{center}
 \begin{tabular}{|c|c|c|c|c|}
  \hline
   $t_1$ & $r_1$ &          &          & $q_1$    \\
  \hline
   $t_0$ & $r_0$ & $s_0$    & $p_0$    & 	  \\
  \hline
         &       & $s_{-1}$ & $p_{-1}$ & $q_{-1}$ \\
  \hline
 \end{tabular}
 \end{center}
%\end{table}
%  \begin{equation*}
%   \begin{aligned}
%    t_1 & r_1 &        &        & q_1    \\
%    t_0 & r_0 & s_0    & p_0    & 	\\
%        &     & s_{-1} & p_{-1} & q_{-1}
%   \end{aligned}
%  \end{equation*}

\vspace{.2cm}
 
 This set of points has the same structure as the set of points in Lemma \ref{10points}, so the lemma applies and any quadric passing through $9$ of these points passes
 through the tenth. But  we know that the $9$
 points 
 $$\{t_0, t_1, r_0, r_1, s_{-1}, s_0, p_{-1}, p_0, q_{-1}\}$$ are contained
 in the $5$ quadrics $Q_i$. Thus, the tenth point $q_1$ is also contained in
 these quadrics.
 
 We can use this same trick to argue that the point $r_{-1}$ is also contained
 in the quadrics.\\
 
 Now we want to extend this argument to include all the points $s_{-i}$ and $t_i$.
 We will manage this by induction.
 
 First of all, we already know that the points $\{s_0, s_{-1}, t_0, t_1\}$ are
 contained in the $5$ quadrics. 
 
 Now, for the induction hypothesis, let us suppose that the set of points 
 $\{s_0, s_{-1} ... s_{-i}, t_0, t_1 ... t_i\}$ are all contained in our set 
 of $5$ quadrics. We will now prove that both $s_{-i-1}$ and $t_{i+1}$ are
 also contained in the $5$ quadrics.
 
 To begin with, consider the following set of points:
 
\begin{table}[h!]
 \centering
 \begin{tabular}{|c|c|c|c|c|}
  \hline
   $p_1$ & $q_0$    &          &            & $t_{i+1}$ \\
  \hline
   $p_0$ & $q_{-1}$ & $r_0$    & $s_{-i+1}$ &	      \\
  \hline
         &          & $r_{-1}$ & $s_{-i}$   & $t_{i-1}$ \\
  \hline
 \end{tabular}
\end{table}
%  \begin{equation*}
%   \begin{aligned}
%    p_1 & q_0    &        &        & t_2    \\
%    p_0 & q_{-1} & r_0    & s_0    & 	   \\
%        &     	& r_{-1} & s_{-1} & t_0
%   \end{aligned}
%  \end{equation*}
 
 Although it is slightly more complicated than before, one can check that the same hyperplane relation holds for this set of points as the one in Lemma \ref{10points}. 
Hence, Lemma \ref{10points} applies. Since we know from the induction hypothesis
 that the $9$ points 
 $$\{p_0, p_1, q_{-1}, q_0, r_{-1}, r_0, s_{-i+1}, s_{-i}, t_{i-1}\}$$
 are all contained in the $5$ quadrics $Q_i$, we deduce that the tenth point $t_{i+1}$ 
 is also contained in the quadrics.
 
 On the other hand, let us consider the following points:

\begin{table}[h!]
 \centering
 \begin{tabular}{|c|c|c|c|c|}
  \hline
   $p_{-1}$ & $q_0$ &       &           & $s_{-i-1}$ \\
  \hline
   $p_0$    & $q_1$ & $r_0$ & $t_{i-1}$ & 	     \\
  \hline
            &       & $r_1$ & $t_i$     & $s_{-i+1}$ \\
  \hline
 \end{tabular}
\end{table}

%  \begin{equation*}
%   \begin{aligned}
%    p_{-1} & q_0 &     &     & s_{-2} \\
%    p_0    & q_1 & r_0 & t_0 & 	     \\
%           &     & r_1 & t_1 & s_0
%   \end{aligned}
%  \end{equation*}
 
 As in the previous case we have that the same structure holds. So Lemma \ref{10points} applies, and by the same argument as before, since the
 induction hypothesis tells us that the $9$ points 
 $$\{p_0, p_{-1}, q_0, q_1,
 r_0, r_1, t_{i-1}, t_i, s_{-i+1}\}$$
  are already contained in our $5$ quadrics,
 we get that the tenth point of the set $s_{-i-1}$ is also contained in the 
 quadrics.
\end{proof}

With this theorem we have proven that the dual set of a $5$-cell grid (or a 
segment of rather good edges in $\Gamma$) is contained in the intersection of
$5$ quadrics. Later we will prove using this that any set of points spanning
few ordinary hyperplanes must also be mostly contained in the intersection of
$5$ quadrics.

\section{The variety defined by the intersection of five quadrics}

In this section, we shall study some properties of the variety defined by five linearly independent quadrics, although the results in this section are not necessary in order to be able to prove the main theorem, Theorem~\ref{FullStr}, of the article.

The main theorem of this section is inspired by Glynn's article \cite{Glynn94}. In his
paper, Glynn talks, among other things, about normal rational curves and the 
intersection of quadrics. Although his work is applied to spaces over the finite field 
$\mathbb{F}_q$, most of his results can be adapted to work over an arbitrary field ${\mathbb F}$.

We are particularly interested in the following theorem which is \cite[Theorem 3.1]{Glynn94}. 

We define an {\em arc} of $\mathbb{P}^d({\mathbb F})$ as a set $\mathcal{A}$ of points of
$\mathbb{P}^d({\mathbb F})$ with the property that any subset of $d+1$ points of $\mathcal{A}$ spans $\mathbb{P}^d({\mathbb F})$.

\begin{theorem} \label{Glyn}
 Consider a subspace $Q$ of quadrics on $\mathbb{P}^d({\mathbb F})$ generated by a collection of ${d \choose 2}$ independent quadrics. Let $\mathcal{A} = \bigcap Q$ be the intersection of the quadrics in $Q$. Suppose that $\mathcal{A}$ generates $\mathbb{P}^d({\mathbb F})$ and that $Q$ does not contain any quadric that is the union of two hyperplanes of 
 $\mathbb{P}^d({\mathbb F})$. Then $\mathcal{A}$ is an arc of $\mathbb{P}^d({\mathbb F})$.
\end{theorem}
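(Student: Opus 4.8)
The plan is to argue by contradiction: suppose some hyperplane $H$ of $\mathbb{P}^d({\mathbb F})$ contains at least $d+1$ points of $\mathcal{A} = \bigcap Q$, and derive that $Q$ must then contain a quadric splitting as the union of two hyperplanes. First I would fix the linear form $\ell$ with kernel $H$ and consider the restriction map $\rho$ that sends a quadric $f \in Q$ to its restriction $f|_H$, a quadric form on the hyperplane $H \cong \mathbb{P}^{d-1}({\mathbb F})$. The kernel of $\rho$ consists precisely of those $f \in Q$ that vanish identically on $H$, i.e. those of the form $f = \ell \cdot \lambda$ for some linear form $\lambda$; if $\ker \rho \neq 0$ then any nonzero such $f = \ell \cdot \lambda$ is the union of the hyperplane $H$ and the hyperplane $\{\lambda = 0\}$, contradicting the hypothesis on $Q$. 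Hence $\rho$ is injective and the image $\rho(Q)$ is a space of quadrics on $\mathbb{P}^{d-1}({\mathbb F})$ of dimension ${d \choose 2}$.

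Next I would count conditions. The $\ge d+1$ points of $\mathcal{A} \cap H$ impose at most $d+1$ linear conditions on the space of quadrics of $\mathbb{P}^{d-1}({\mathbb F})$, which has dimension ${d+1 \choose 2} = \binom{d}{2} + d$. Since every quadric in $\rho(Q)$ vanishes at all these points, $\rho(Q)$ lies in the subspace of quadrics of $\mathbb{P}^{d-1}({\mathbb F})$ vanishing at those points, a subspace of dimension at least $\binom{d+1}{2} - (d+1) = \binom{d}{2} - 1$. But $\dim \rho(Q) = \binom{d}{2}$, which already exceeds this bound — so in fact the $d+1$ points impose fewer than $d+1$ independent conditions, equivalently they are not in general position as a point set of $\mathbb{P}^{d-1}({\mathbb F})$ with respect to quadrics, but that alone is not yet the contradiction. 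The real leverage is that $\mathcal{A}$ generates $\mathbb{P}^d({\mathbb F})$: I would use this to find, inside $H$, enough points of $\mathcal{A}$ together with enough points of $\mathcal{A}$ off $H$ so that the full intersection $\mathcal{A}$ spans, and then transfer a syzygy. Concretely, the approach (following Glynn's Theorem 3.1) is to show that if $H$ meets $\mathcal{A}$ in $d+1$ points, one can produce a quadric $Q' \in Q$ whose restriction to $H$ is a quadric of $\mathbb{P}^{d-1}$ containing those $d+1$ points but which is forced (by a dimension count on $\rho(Q)$ being one larger than expected) to be \emph{reducible} on $H$, and then a matching argument off $H$ — using that $\mathcal{A}$ spans and Bézout-type bounds on $\mathcal{A} \cap H$ versus $\mathcal{A}$ — upgrades this to $Q'$ itself being a union of two hyperplanes of $\mathbb{P}^d({\mathbb F})$, contradicting the hypothesis.

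I would organise the details as follows: (i) establish injectivity of $\rho$ as above; (ii) show $\mathcal{A} \cap H$ spans $H$ (otherwise $\mathcal{A}$ lies in a hyperplane union a point, again contradicting that $\mathcal{A}$ spans, via a short argument on the ${d\choose 2}$ quadrics); (iii) set up the dimension count comparing $\dim \rho(Q) = \binom{d}{2}$ with the space of quadrics on $H$ vanishing on $\mathcal{A}\cap H$, concluding that $\mathcal{A}\cap H$ imposes at most $\binom{d}{2}$ conditions on quadrics of $\mathbb{P}^{d-1}$ hence cannot be an arc-sized generic configuration; (iv) invoke the inductive/structural description of such special point sets (the analogue of Glynn's argument, essentially that a set of $\ge d+1$ points of $\mathbb{P}^{d-1}$ imposing too few conditions on quadrics must have $d$ of them on a hyperplane or be otherwise degenerate) to locate two hyperplanes of $\mathbb{P}^{d-1}$ covering them; (v) lift these back, using $\ell$ and the fact that $\mathcal{A}$ spans $\mathbb{P}^d$, to a reducible member of $Q$. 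The main obstacle I anticipate is step (iv)–(v): ensuring the reducibility found on $H$ genuinely comes from a quadric in the prescribed subspace $Q$ and splits \emph{globally} over $\mathbb{P}^d({\mathbb F})$ rather than only on $H$; this is exactly where Glynn's finite-field argument uses a careful combinatorial/counting input, and adapting it to an arbitrary field ${\mathbb F}$ requires replacing those counts by rank statements about the restriction and evaluation maps, which is the delicate part of the proof.
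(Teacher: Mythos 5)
Your steps (i)--(iii) are sound and match the method the paper itself uses for the closely analogous Theorem~\ref{glynn+} (the paper only cites Glynn for Theorem~\ref{Glyn}, but its proof of Theorem~\ref{glynn+} is the template): restrict the quadrics of $Q$ to $H$, observe that a nonzero element of the kernel of the restriction map is $\ell\cdot\lambda$ and hence a forbidden hyperplane pair, and compare $\dim\rho(Q)={d\choose 2}$ with the space of quadrics of $H\cong\mathbb{P}^{d-1}({\mathbb F})$ vanishing on $\mathcal{A}\cap H$. You correctly deduce from this that the $d+1$ points would have to impose at most $d$ independent conditions on quadrics of $H$. The genuine gap is what you do next: steps (iv)--(v), with their appeal to reducibility on $H$, B\'ezout-type bounds and a ``matching argument off $H$'', are both unnecessary and, as you concede, not actually carried out. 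The contradiction is already at hand and needs only one more concrete ingredient, which is the heart of Glynn's argument and of the paper's proof of Theorem~\ref{glynn+}.

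That missing ingredient is a normalization plus an explicit independence check. Since $\mathcal{A}$ spans $\mathbb{P}^d({\mathbb F})$, the exchange property lets you replace $H$ (if necessary) by a hyperplane that still contains at least $d+1$ points of $\mathcal{A}$, $d$ of which form a basis of $H$; after a change of coordinates these are $p_i=[0:\cdots:1:\cdots:0]$ for $i=1,\dots,d$ and $p_{d+1}=[c_1:\cdots:c_d]$. The first $d$ points force the diagonal coefficients $\alpha_{ii}$ to vanish, and since $p_{d+1}$ is distinct from each $p_i$ it has at least two nonzero coordinates, so $c_ic_j\neq 0$ for some $i<j$ and the condition $\sum_{i<j}\alpha_{ij}c_ic_j=0$ is nontrivial on the remaining coefficients. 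Hence the $d+1$ points impose exactly $d+1$ independent conditions, the space of quadrics of $H$ through them has dimension ${d+1\choose 2}-(d+1)={d\choose 2}-1<{d\choose 2}=\dim\rho(Q)$, and $\rho$ cannot be injective; the resulting $\ell\cdot\lambda\in Q$ is the contradiction. Incidentally, your step (ii) is stated too strongly: $\mathcal{A}\cap H$ need not span $H$, and the correct fix is the replacement of $H$ just described rather than a proof that the original $H$ is spanned.
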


This theorem is relevant here because it implies that if instead of $5$
linearly independent quadrics we had that our set $\mathcal{S}$ is included in the
intersection of $6$ linearly independent quadrics such that they do not span 
any hyperplane pair quadric, then the set $\mathcal{S}$ would be an arc. By the
definition of arc, this would mean that every hyperplane spanned by the
set $\mathcal{S}$ would be ordinary, which is precisely the contrary of our aim here.

However, Glynn's theorem can be extended to the following theorem.

\begin{theorem} \label{glynn+}
Consider a subspace $Q$ of quadrics on $\mathbb{P}^d({\mathbb F})$ generated by a collection of ${d \choose 2}-1$ independent quadrics. Let $\mathcal{S} = \bigcap Q$ be the intersection of the quadrics in $Q$. Suppose that $\mathcal{S}$ generates $\mathbb{P}^d({\mathbb F})$, does not contain a line, and that $Q$ does not contain any quadric that is the union of two hyperplanes of 
 $\mathbb{P}^d({\mathbb F})$.  Then $\mathcal{S}$ has the property that any hyperplane contains at most $d+1$ points of $\mathcal{S}$.
\end{theorem}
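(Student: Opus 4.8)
The plan is to deduce Theorem~\ref{glynn+} from Glynn's theorem (Theorem~\ref{Glyn}) by a dimension-counting argument: intersecting with one more generic quadric. Concretely, suppose $\mathcal{S}=\bigcap Q$ where $Q$ is spanned by ${d\choose 2}-1$ independent quadrics, $\mathcal{S}$ spans $\mathbb{P}^d({\mathbb F})$, contains no line, and $Q$ contains no hyperplane-pair quadric. Assume for contradiction that some hyperplane $H$ contains $d+2$ points of $\mathcal{S}$, say $x_1,\dots,x_{d+2}$. The idea is to enlarge $Q$ to a subspace $Q'$ of dimension ${d\choose 2}$ by adding one further quadric $\rho$ chosen so that $\mathcal{A}:=\bigcap Q'$ still spans $\mathbb{P}^d({\mathbb F})$ and still contains the $d+2$ points $x_1,\dots,x_{d+2}$; then $\mathcal{A}$ is a subset of $\mathcal{S}$ containing $d+2$ coplanar points, contradicting Theorem~\ref{Glyn} — provided one also checks $Q'$ contains no hyperplane-pair quadric.

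\textbf{Key steps.} First I would fix the hyperplane $H$ and the $d+2$ points of $\mathcal{S}\cap H$ and consider the linear system of all quadrics vanishing on these $d+2$ points; since imposing $d+2$ point-conditions cuts down the ${d+2\choose 2}$-dimensional space of quadrics by at most $d+2$, this system has projective dimension at least ${d+2\choose 2}-1-(d+2)={d\choose 2}-2$, and it contains $Q$ (which has projective dimension ${d\choose 2}-2$). I would then pick a quadric $\rho$ vanishing on the $d+2$ points but not lying in $\langle Q\rangle$ — such a $\rho$ exists unless $Q$ already equals the full system of quadrics through the $d+2$ points, a degenerate case I would dispatch separately (in that case one can instead drop one of the original generators and re-argue, or observe directly that $\mathcal{S}$ would be forced to lie in $H$, contradicting that $\mathcal{S}$ spans). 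Set $Q'=\langle Q,\rho\rangle$, of projective dimension ${d\choose 2}-1$, and $\mathcal{A}=\bigcap Q'\supseteq\{x_1,\dots,x_{d+2}\}$, with $\mathcal{A}\subseteq\mathcal{S}$. Next I would verify the two remaining hypotheses of Theorem~\ref{Glyn} for $Q'$: that $\mathcal{A}$ spans $\mathbb{P}^d({\mathbb F})$, and that $Q'$ contains no quadric that is a union of two hyperplanes. The first follows because $\mathcal{S}$ spans and contains no line: if $\mathcal{A}$ lay in a hyperplane $H'$, then $\mathcal{S}\setminus\mathcal{A}$ would be cut out on the quadric $\rho$ inside $\mathcal{S}$... — here I would argue that the extra point-conditions can only fail to be independent in controlled ways, using the no-line hypothesis to rule out a positive-dimensional residual. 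Finally, applying Theorem~\ref{Glyn} to $Q'$ gives that $\mathcal{A}$ is an arc, so $H$ meets $\mathcal{A}$ in at most $d$ points, contradicting $x_1,\dots,x_{d+2}\in H\cap\mathcal{A}$.

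\textbf{Main obstacle.} The delicate point is controlling $Q'$: ensuring that adding $\rho$ does not accidentally create a hyperplane-pair quadric inside $Q'$, and that $\mathcal{A}=\bigcap Q'$ still spans $\mathbb{P}^d({\mathbb F})$ rather than collapsing into a hyperplane or acquiring a line. The hyperplane-pair issue is the genuinely subtle one: a generic $\rho$ in the linear system through the $d+2$ points need not be degenerate, but a priori the pencil $\langle \rho_0,\rho\rangle$ for $\rho_0\in Q$ could contain a reducible member. I expect to handle this by choosing $\rho$ from the system through the $d+2$ points generically (the reducible quadrics form a proper subvariety of the space of all quadrics, and the linear system has dimension $\geq {d\choose 2}-1\geq 1$ for $d\geq 2$, so a generic choice avoids any prescribed proper subvariety), together with the hypothesis that $Q$ itself contains no hyperplane-pair; one then argues that if some combination $\rho_0+\lambda\rho$ ($\rho_0\in\langle Q\rangle$) were a hyperplane pair, varying $\lambda$ and $\rho_0$ would force a whole family of degenerate quadrics meeting $Q$, which a generic $\rho$ can be chosen to avoid. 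The spanning and no-line conditions for $\mathcal{A}$ will follow from a careful bookkeeping of how many of the $d+2$ point-conditions are independent over $Q$, using that $\mathcal{S}$ already spans and is line-free.
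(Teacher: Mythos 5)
Your proposed route --- enlarging $Q$ by one extra quadric $\rho$ through the $d+2$ offending points and then invoking Theorem~\ref{Glyn} --- is genuinely different from the paper's argument, but the obstacle you flag at the end is not a technicality you can expect to dispatch by genericity; it is a real gap. The bad locus is not ``the reducible quadrics'' but the set of $\rho$ for which the \emph{span} $\langle Q,\rho\rangle$ meets the variety $V$ of hyperplane pairs, i.e.\ the join of $\langle Q\rangle$ with $V$ intersected with your linear system. Since $\dim V = 2d$ and $\langle Q\rangle$ has projective dimension ${d\choose 2}-2$, this join can have dimension up to ${d\choose 2}+2d-1$, which exceeds the dimension ${d+2\choose 2}-1-(d+2)$ of the system of quadrics through the $d+2$ points by $d+1$; so there is no a priori reason the bad locus is a proper subvariety of the system you are choosing $\rho$ from. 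Worse, the theorem is stated over an arbitrary field ${\mathbb F}$ (Glynn's setting is ${\mathbb F}_q$), where ``a generic choice avoids a proper subvariety'' is not available even when the subvariety is proper. You also leave unverified that $\mathcal{A}=\bigcap Q'$ still generates ${\mathbb P}^d({\mathbb F})$: the $d+2$ points all lie in $H$, so they do not themselves span, and cutting $\mathcal{S}$ by $\rho=0$ could in principle collapse it into a hyperplane; the sentence where you address this trails off without an argument.

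The paper avoids all of this by never passing through Theorem~\ref{Glyn}. It restricts directly to $H$: choose coordinates so that $d$ of the $d+2$ points are the basis points of $H$ and the remaining two are $p_{d+1},p_{d+2}$; the no-line hypothesis is exactly what guarantees that $p_{d+1}$ and $p_{d+2}$ impose two further independent conditions on quadrics of $H\cong{\mathbb P}^{d-1}({\mathbb F})$ (they could fail to do so only if both lay on a line through two basis points, which would force that line into every quadric of $Q$ and hence into $\mathcal{S}$). Thus quadrics of $H$ vanishing on $\mathcal{S}\cap H$ form a space of dimension at most ${d+1\choose 2}-(d+2)=\tfrac12 d^2-\tfrac12 d-2$, while $Q$ has dimension ${d\choose 2}-1=\tfrac12 d^2-\tfrac12 d-1$; the restriction map $Q\to\{\text{quadrics on }H\}$ therefore has nontrivial kernel, and any quadric in the kernel vanishes identically on $H$, hence is divisible by the linear form defining $H$ and is a hyperplane-pair quadric in $Q$ --- contradicting the hypothesis. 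I would recommend abandoning the reduction to Theorem~\ref{Glyn} and adopting this direct dimension count; note that it is precisely where the no-line hypothesis (absent from Theorem~\ref{Glyn}) enters.
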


\begin{proof}
 Suppose that this is false, and let $H$ be a hyperplane containing at least $d+2$
 points of $S$. 
 
Since a vector space satisfies the exchange axiom and $\mathcal{S}$ spans the whole space, there is a hyperplane $H$ which contains at least $d+2$ points $\mathcal{S}$, $d$ of which span $H$. After a suitable change of basis, we can assume that $H$ is the hyperplane $X_{d+1} = 0$ and that the $d+2$ points are 
 
 \begin{equation*}
  \begin{aligned}
   p_1 &= (1:0: 0: \ldots :0:0) \\
   p_2 &= (0: 1: 0: \ldots: 0:0) \\
   \vdots  \\
   p_{d-1} &= (0: \ldots:0:1: 0: 0) \\
   p_d &= (0: 0: \ldots:0: 1: 0) \\
   p_{d+1} &= (c_1: c_2: \ldots:c_d: 0) \\
   p_{d+2} &= (e_1: e_2: \ldots:e_d: 0)
  \end{aligned}
 \end{equation*}
 
 We want to prove that these $d+2$ points impose distinct conditions on the space of quadrics containing them. Clearly $p_1, \ldots, p_d$ impose distinct conditions. It is easy to check that the two last points impose two further distinct conditions on the
 space of quadrics. Indeed, the points $p_{d+1}$ and $p_{d+2}$ will impose linearly independent
 conditions in the space of quadrics unless the $2 \times 2$ sub-determinants of the matrix
 
$$
\left(\begin{array}{cccccc}
   c_1c_2 & c_1c_3 & c_1c_4 &  \ldots & c_{d-2}c_d & c_{d-1}c_d \\
   e_1e_2 & e_1e_3 & e_1e_4 & \ldots & e_{d-2}e_d & e_{d-1}e_d
  \end{array} \right)
$$
 
 are zero. This can only happen if the two points are the same or if only one of the 
 coefficients $X_iX_j$ is non-zero, which means that the points $p_{d+1}$ and $p_{d+2}$ are
 on a line joining two of the basis points. But then this would imply that the quadrics, and hence $\mathcal{S}$, would contain this line which by hypothesis does not happen. Hence, $p_{d+1}$ and $p_{d+2}$ impose different conditions on the space of quadrics.

 So the $d+2$ points $p_1,\ldots,p_{d+2}$ impose $d+2$ linearly independent conditions on the space of
 quadrics defined on the hyperplane $H \cong \mathbb{P}^{d-1}({\mathbb F})$. This space of quadrics has dimension ${d+1 \choose 2}$ which implies that we have at most ${d+1 \choose 2}-(d+2)=\frac{1}{2}d^2-\frac{1}{2}d-2$ linearly 
 independent quadrics on $H$ containing the points $\mathcal{S} \cap H$.
 
 As we have ${d \choose 2}-1=\frac{1}{2}d^2-\frac{1}{2}d-1$ linearly independent quadrics which are zero on $\mathcal{S}$, this means that
 in the subspace of quadrics generated by them, there must be two independent
 quadrics that agree on their restriction to $H$. Taking the difference of these two quadrics, this implies that there is a quadric
 in the subspace generated by the ${d \choose 2}-1$ quadrics which is zero on $H$. This quadric is necessarily a hyperplane pair quadric, which contradicts the hypothesis.
\end{proof}

In our case when $d=4$, Theorem~\ref{glynn+} implies that if $\mathcal{S}= \bigcap Q$ spans the whole space and does not contain a line then it has the property that any hyperplane of $\mathbb{P}^4({\mathbb R})$ intersects $\mathcal{S}$ in at most $5$ points.

\section{Structural theorems in $4$ dimensions}

%After all the work done in the previous sections, here we are ready 
In this section we prove structure theorems for four dimensional space. Firstly, we present a weak version of the structure theorem. This version of the structure theorem, although a weaker result than the main structure theorem we will
present later, has merit of its own, since we do not make any assumptions with respect to the value of $K$, in contrast with the main structure theorem, where we need $K$ to be $o(n^{\frac{1}{7}})$.

We will state our weak structure theorem in the same fashion as
Green and Tao. The exact numbers in the theorem are not that important, but it is important that we can prove that the set $\mathcal{S}$ is contained in the union of an $O(K)$ number of varieties, each of which is the intersection of $5$ quadrics.

\begin{theorem}[Weak structure theorem in $4$ dimensions]
 Let $\mathcal{S}$ be a set of $n$ points in $\mathbb{P}^4({\mathbb R})$, such that any 
 subset of $4$ points span a hyperplane and $\mathcal{S}$ is not contained 
 in a hyperplane. If $K\geqslant 1$ and $\mathcal{S}$ spans at most $Kn^3$ ordinary hyperplanes, 
 then $\mathcal{S}$ is contained in the union of at most $33699K$ varieties, 
 each of which is the intersection of $5$ quadrics.
\end{theorem}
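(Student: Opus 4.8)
The plan is to follow the blueprint of Green--Tao and of Ball~\cite{Ball2018}, working with the dual graph $\Gamma = \Gamma(\mathcal{S})$. By Lemma~\ref{bedges} and Lemma~\ref{sbedges}, $\Gamma$ has at most $1872Kn^3$ slightly bad edges, so the remaining edges --- which I will call \emph{rather good} --- are plentiful: every line $p^* \cap q^* \cap r^*$ of $\Gamma$ is partitioned into segments by the vertices on it, and most of these segments consist entirely of rather good edges. The idea is that each maximal segment $T$ of rather good edges on a line $p^* \cap q^* \cap r^*$ produces, via Theorem~\ref{quadricsx}, a variety $V_T$ which is the intersection of $5$ linearly independent quadrics and which contains $p$, $q$, $r$ and every point $s \in \mathcal{S}$ with $s^* \cap (p^* \cap q^* \cap r^*)$ lying in $T$. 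The point of the argument is that only $O(K)$ distinct such varieties arise, and that together they cover all of $\mathcal{S}$.

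First I would make precise the counting of rather good edges and of the segments they form. Each slightly bad edge lies on a unique line $p^* \cap q^* \cap r^*$, and removing the at most $1872Kn^3$ slightly bad edges from $\Gamma$ breaks the lines into maximal rather good segments; the number of such segments is at most the number of slightly bad edges plus the number of lines, hence $O(Kn^3)$ once one notes $n^3$ is the right order (there are $\binom{n}{3}$ lines but most carry at most a bounded number of segments). More importantly, I need to show that a given point $s \in \mathcal{S}$ that is ``generic'' --- i.e. not an endpoint of too many slightly bad edges --- lies on one of these varieties. Here one argues that if $s^*$ meets a line $p^* \cap q^* \cap r^*$ in a vertex of degree $20$ lying in the interior of a rather good segment, then $s \in V_T$ for that segment. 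Since the total degree of slightly bad edges is $O(Kn^3)$, only $O(K)$ points of $\mathcal{S}$ can fail to be captured this way.

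The crucial step --- and the main obstacle --- is showing that the number of \emph{distinct} varieties $V_T$ is $O(K)$ rather than $O(Kn^3)$. The key observation is that a variety which is the intersection of $5$ linearly independent quadrics and which contains enough points of $\mathcal{S}$ (and spans $\mathbb{P}^4$, does not contain a line) has bounded intersection with any hyperplane by Theorem~\ref{glynn+} --- at most $5$ points. So if two rather good segments $T$, $T'$ give varieties sharing sufficiently many points of $\mathcal{S}$ (more than some absolute constant), then $V_T = V_{T'}$, because $5$ linearly independent quadrics are determined by a bounded number of general points of such a variety, using the rigidity established in Lemma~\ref{11pts} and Lemma~\ref{10points} (the ``one extra dimension'' phenomenon forces agreement). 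Conversely, each variety $V_T$ contains $\Omega(n)$ points of $\mathcal{S}$ when $T$ is a long segment; a double count --- each point of $\mathcal{S}$ lies on boundedly many of the varieties, while each variety eats $\Omega(n)$ points --- would give too few varieties, but what one actually needs is that the $O(Kn^3)$ segments collapse to $O(K)$ classes. This collapse is driven by: fix $p,q,r$; the rather good segments on $p^* \cap q^* \cap r^*$ all yield varieties through $p,q,r$, and adjacent segments, or segments on lines sharing two of $p^*,q^*,r^*$, yield the \emph{same} variety because the $5$-cell grid structure overlaps and the defining quadrics propagate (exactly the induction inside the proof of Theorem~\ref{quadricsx}). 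Carefully chaining these identifications along the graph, one shows the equivalence classes of segments are in bounded-to-one correspondence with something of size $O(K)$ --- essentially the slightly bad edges act as the only possible ``boundaries'' between distinct varieties, and there are $O(Kn^3)$ of those but each variety's boundary has size $\Omega(n^2)$, or one argues directly that a point $s$ together with its rather good neighbourhood determines $V_T$ uniquely so the number of varieties is bounded by the number of ``bad'' points times a constant. I would isolate this propagation-and-collapse argument as the technical heart, presenting it as: (i) define an equivalence on rather good segments by ``same defining $5$ quadrics''; (ii) show two segments meeting the graph in a common rather good edge are equivalent; (iii) show the rather good edges form $O(K)$ connected clusters after deleting slightly bad edges, each cluster forcing a single variety; (iv) bound the number of clusters by $O(K)$ using the edge count from Lemma~\ref{sbedges} together with the fact that each cluster has size $\Omega(n^3/K)$ or similar. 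Finally, adding the $O(K)$ exceptional points, each lying trivially on some intersection of $5$ quadrics, and tallying constants gives the bound $33699K$.
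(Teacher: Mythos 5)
There is a genuine gap, and it sits exactly where you flag ``the main obstacle'': the collapse of the segment-varieties down to $O(K)$ is never actually proved. In your scheme you take \emph{every} line $p^*\cap q^*\cap r^*$ and every maximal rather good segment on it, which yields on the order of $\binom{n}{3}$ segments (the number of lines alone is $\Theta(n^3)$, independent of $K$), and you then need all of these to fall into $O(K)$ classes of equal varieties. The mechanisms you offer do not close this: (a) the claim that two segments sharing ``sufficiently many'' points force the same five quadrics needs the space of quadrics through those points to have dimension exactly $5$, which is not established (Lemmas~\ref{11pts} and \ref{10points} give dimension \emph{at least} $5$, resp.\ $6$, for specific small configurations, not uniqueness for large ones); (b) the assertion that each cluster has size $\Omega(n^3/K)$, or that ``each variety's boundary has size $\Omega(n^2)$'', is unsupported --- deleting $O(Kn^3)$ slightly bad edges from a structure with $\Theta(n^3)$ lines gives no upper bound on the number of connected clusters better than $\Theta(n^3)$; and (c) the appeal to Theorem~\ref{glynn+} requires the variety to contain no line and the quadric pencil to contain no hyperplane pair, neither of which is verified. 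So as written the argument delivers a cover by $O(Kn^3)$ varieties, not $33699K$.

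The paper sidesteps all of this with a much shorter pigeonhole: by Lemma~\ref{sbedges} there are at most $1872Kn^3$ slightly bad edges in total, so \emph{some single} line $l=p^*\cap q^*\cap r^*$ carries at most $1872Kn^3/\binom{n}{3}\leqslant 11233K$ of them (for $n\geqslant 33699$; smaller $n$ is trivial). The point you are missing is that one line suffices for the covering: for every $s\in\mathcal{S}\setminus\{p,q,r\}$ the solid $s^*$ meets $l$ in a vertex of $\Gamma$, so $s$ either lies in one of the at most $11233K$ varieties attached by Theorem~\ref{quadricsx} to the good segments of $l$ (which also contain $p,q,r$), or $s^*$ passes through one of the at most $2\times 11233K$ endpoints of slightly bad edges on $l$, and the dual hyperplane of such a vertex (itself contained in an intersection of five quadrics) covers all such $s$ at once. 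This gives $11233K+2\times 11233K=33699K$ with no need to compare varieties coming from different segments or different lines.
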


\begin{proof}
 From Lemma \ref{sbedges} we get that there are at most $1872Kn^3$ slightly bad edges
 in the graph $\Gamma$. Using the pigeon-hole principle we know that there have to
 be points $p, q, r \in S$ such that the number of slightly bad edges on the line
 $l = p^* \cap q^* \cap r^*$ is at most
 
 \begin{equation*}
  \frac{1872Kn^3}{{n \choose 3}} = 11232K\frac{n^2}{(n-1)(n-2)}=11232K\left(1+\frac{3n-2}{(n-1)(n-2)}\right).
 \end{equation*}
 
 We can assume that $n \geqslant 33699$ since otherwise the theorem is immediately proven
 by choosing a variety for each point. This alone is enough to bound the number of
 slightly bad edges $b$ of $l$ by $11233K$.
 
 The slightly bad edges will partition the line $l$ into a set of at most $b$ 
 segments of consecutive good edges. By Theorem \ref{quadricsx}, we know that for any
 of these segments $T$, there are $5$ linearly independent quadrics containing the
 points $p, q, r$ and such that for any point $s \in \mathcal{S}$, where $s^*$ intersects the
 line $l$ in $T$, is contained in the intersection of the $5$ quadrics. So we can
 cover all the points whose duals intersects $l$ in the middle of a segment of good
 edges with less than $11233K$ of these varieties.
 
 On the other hand, any point whose dual intersects $l$ in a vertex of a bad edge
 of $l$ has to be treated separately. We have, though, at most $2 \times 11233K$
 vertices incident with a bad edge in $l$, and for each one of these vertices, we 
 can construct a variety (intersection of $5$ quadrics) such that all the points 
 whose dual intersects $l$ in that vertex are contained in the variety (this is obvious,
 since the dual of the vertex itself is a hyperplane that contains all of these
 points).
 
 From the number of varieties covering the points incident with the segments of good
 edges and the ones incident with the bad edges, we conclude that all the points of 
 $ \mathcal{S}$ can be covered with a collection of at most $11233K + 2\times11233K = 33699K$ 
 varieties each of which is the intersection of $5$ quadrics.
\end{proof}

Now we present the full structure theorem, which is the main theorem of
this section, as well as the whole paper. The purpose of the theorem is to classify
the sets of points in $\mathbb{P}^4({\mathbb R})$ that span few ordinary hyperplanes, as did the structure theorems of Green and Tao \cite{GT2013} and Ball \cite{Ball2018} in two and three dimensions respectively. 

\begin{theorem} [Full Structure Theorem] \label{FullStr}
 Let $\mathcal S$ be a set of $n$ points in $\mathbb{P}^4({\mathbb R})$ such that any subset of $4$ points
 spans a hyperplane and such that $\mathcal S$ is not contained in a hyperplane. If $\mathcal S$
 spans less than $Kn^3$ ordinary hyperplanes, for some $K = o(n^{\frac{1}{7}})$, then
 one of the following holds:
 \begin{enumerate}[(i)]
  \item\label{strucase1} All but at most $6K$ points of $\mathcal S$ are contained in a hyperplane.
  \item\label{strucase2} There are $5$ linearly independent quadrics such that all but 
  at most $O(K)$ points of $\mathcal S$ are contained in the intersection of the $5$ quadrics. 
 \end{enumerate}

\end{theorem}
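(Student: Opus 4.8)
The plan is to bootstrap the Weak Structure Theorem, which already covers $S$ by $O(K)$ varieties that are intersections of five linearly independent quadrics, into the statement that a \emph{single} such variety (or a single hyperplane) accounts for all but $O(K)$ points of $S$; this mirrors the way Green and Tao \cite{GT2013} and the first author \cite{Ball2018} strengthen their own weak structure theorems. Throughout I keep working with the graph $\Gamma=\Gamma(S)$ and its rather good edges. The key tools are Theorem~\ref{quadricsx}, which attaches to any line $\ell=p^*\cap q^*\cap r^*$ of $\Gamma$ carrying a segment $T$ of rather good edges a variety $V_{\ell,T}$, an intersection of five quadrics, containing $p,q,r$ and every $s$ with $s^*\cap\ell\in T$, and Lemma~\ref{10points} together with Lemma~\ref{11pts}, which show that such a variety is pinned down by only ten suitably structured of its points.

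First I would use Lemma~\ref{sbedges}: there are at most $1872Kn^3$ slightly bad edges in $\Gamma$, so by pigeonhole some line $\ell_0$ of $\Gamma$ carries at most $O(K)$ of them, hence --- since $K=o(n^{1/7})$ is in particular $o(n)$ --- a segment $T_0$ of $\Omega(n/K)$ consecutive rather good edges. Let $V=V_{\ell_0,T_0}$ be the resulting variety. I would then propagate $V$ across the graph: by Lemma~\ref{sbedges} again, all but an $O(K/n)$ fraction of the lines of $\Gamma$ carry only $O(K)$ slightly bad edges, hence a rather good segment whose covered set of points of $S$ has size $\Omega(n/K)$; any such segment shares a block of $\gg 10$ points, in a matching $5$-cell structure, with one already known to lie on $V$ (chaining through intermediate lines if necessary), so by Lemma~\ref{10points} its variety is again $V$. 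A point $s$ fails to lie on $V$ only if, on essentially every line of $\Gamma$ on which $s$ has a vertex, that vertex sits within bounded distance of a slightly bad edge; since each slightly bad edge is close to only $O(1)$ vertices while each bad point must be close to $\Omega(n^3)$ of them, the number of points of $S$ off $V$ is $O(Kn^3/n^3)=O(K)$.

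It remains to interpret $V$. If $V$ spans $\mathbb{P}^4({\mathbb R})$ we are in case~\ref{strucase2}. If $V$ lies in a hyperplane $H$ then all but $O(K)$ points of $S$ lie in $H$, and a short separate argument --- using that $S$ has no three collinear points, so that a generic solid through two points of $S\setminus H$ meets $S\cap H$ in an ordinary plane of that three-dimensional set --- sharpens $O(K)$ to $6K$ and gives case~\ref{strucase1}. The main obstacle is the propagation step: one has to chain from $\ell_0$ to an arbitrary line of $\Gamma$ through a path of lines, each sharing enough rather good structure with the next that Lemma~\ref{10points} applies, while staying away from the slightly bad edges, and one has to ensure that the vertices of a would-be exceptional point $s$ are genuinely spread over $\Omega(n^3)$ lines rather than being forced to coincide because $s$ lies on many $5$-point solids. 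It is precisely in controlling these coincidences, and hence in the combinatorics of the chaining, that the hypothesis $K=o(n^{1/7})$ is used; alternatively, projecting from a well-chosen point of $S$ to $\mathbb{P}^3({\mathbb R})$ and invoking the three-dimensional structure theorem of \cite{Ball2018} --- which also requires $K=o(n^{1/7})$ --- could be used to organise the same reduction.
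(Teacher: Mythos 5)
There is a genuine gap, and it sits exactly where you flag ``the main obstacle'': the propagation step is not an argument, and it is not how the result is actually proved. Lemma~\ref{10points} only pins a quadric down using ten points carrying the precise $5$-cell incidence pattern $i+j+k+l+m=0$; in the paper it is used solely to extend the five quadrics \emph{along a single segment on a single line} $p^*\cap q^*\cap r^*$ (that is the content of Theorem~\ref{quadricsx}). To jump from one line $\ell_0$ of $\Gamma$ to a different line you would need to exhibit ten points, common to the two grids, in that exact configuration, and nothing in the paper (or in your sketch) produces such a configuration; two lines of $\Gamma$ generically share only the structure coming from their common hyperplanes, not a full $3\times3\times3\times2\times2$ block. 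Your closing count of exceptional points is also unjustified: a point $s$ off $V$ is ``caught'' at the vertex $p^*\cap q^*\cap r^*\cap s^*$, and vertices of high multiplicity (incident with many hyperplanes of $\mathcal{S}^*$) can absorb many points $s$ near a single slightly bad edge, so the bound $O(Kn^3/n^3)=O(K)$ does not follow from Lemma~\ref{sbedges} without controlling those multiplicities. Finally, your route never actually uses $K=o(n^{1/7})$; you only assert that it is needed ``to control coincidences.''

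The paper's proof is the fallback you mention in your last sentence, and it is essentially unavoidable here: one defines $\mathcal{S}'$ as the points lying on at most $dKn^2$ ordinary solids (so $|\mathcal{S}\setminus\mathcal{S}'|=O(n/d)$), projects $\mathcal{S}$ from a point of $\mathcal{S}'$ to $\mathbb{P}^3({\mathbb R})$, and invokes the three-dimensional structure theorem of \cite{Ball2018} --- this is where $K=o(n^{1/7})$ enters. The planar case of that theorem gives case~(\ref{strucase1}) with the $6K$ bound by the counting argument $c\bigl(\binom{n-c}{3}-c\bigr)\leqslant Kn^3$; the ``two conics'' case is excluded for all but three projection points by a coplanarity argument; and in the main case each projection point $p$ yields \emph{two} quadrics in $\mathbb{P}^4({\mathbb R})$, degenerate at $p$, containing all but $O(K)$ points of $\mathcal{S}$ --- not merely the points dual to one good segment. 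The five quadrics of case~(\ref{strucase2}) are then assembled from the degenerate quadrics at two such points $p_1,p_2$ plus one more chosen so that a degenerate quadric at a third point has a nonzero $X_1X_2$ coefficient, with Theorem~\ref{quadricsx} and Lemma~\ref{11pts} used only to show these all lie in one five-dimensional space of quadrics. Your Theorem~\ref{quadricsx}-plus-chaining scheme, as stated, cannot replace this projection step.
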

\begin{proof}
% We will make the proof of the statement in several steps. \\
 
%Firstly we consider the projection $\mathcal{S}$ to $3$ and $2$ dimensions, and make use of the respective structure theorems in those dimensions. Looking at the possible
% structures of the projection of our set will help us to classify and discard
% the different possibilities that occur.
 
%Then we will need to extract from the structure of the projections the necessary
% information to argue the existence of the $5$ linearly independent quadrics
% that contain the set.\\
 
 Let $\mathcal{S}'$ be the subset of points $p \in \mathcal{S}$ such that $p$
 is incident with at most $DKn^2$ ordinary hyperplanes, for some large constant
 $D$. We can bound the size of $\mathcal{S}'$ in the following way:
 
Since the points of $\mathcal{S}$ not in $\mathcal{S}'$ are incident with more than $DKn^2$ ordinary
 hyperplanes, and since every ordinary hyperplane contains exactly $4$ points,
 we get
 
 \begin{equation*}
  |S \backslash S'| DKn^2 < 4Kn^3.
 \end{equation*}
 
This gives
 
 \begin{equation} \label{Spbound}
  |S'| > \left(1 - \frac{4}{D} \right)n.
 \end{equation}

 If we project the set $\mathcal{S}$ from any point $p \in \mathcal{S}'$ we obtain a set in $\mathbb{P}^3({\mathbb R})$ spanning less than $DKn^2$ ordinary planes.

 The three-dimensional structure theorem from \cite[Theorem 1]{Ball2018} tells us the different possible sets that span few ordinary planes in $\mathbb{P}^3({\mathbb R}
)$. These possibilities are:
 
 \begin{enumerate}[(a)]
  \item\label{stru3cas1} There are two distinct quadrics such that all but at most 
  $O(K)$ points of $\mathcal{S}$ are contained in the intersection of the quadrics. %And all but 
  %at most $O(K)$ points of $\mathcal{S}$ are incident with at least $\frac{3}{2}n - O(K)$ ordinary
 % planes. 
  \item\label{stru3cas2} There are two planar sections of a quadric which contain 
  $\frac{1}{2}n - O(K)$ points of $\mathcal{S}$ each. 
  \item\label{stru3cas3} All but at most $2DK$ points of $\mathcal{S}$ are contained in a plane. 
 \end{enumerate}
 
 First assume that there is a point in $\mathcal{S}'$ where the projection of the set $\mathcal{S}$ from $p$ is
 almost contained in a plane (case \eqref{stru3cas3}).
 
 This implies that there is a solid $\pi$ containing at least $n-c$ points of $\mathcal{S}$, where $c=O(K)$. Let $q$ be a point of $\mathcal{S} \setminus \pi$. Each triple of points $\mathcal{S} \cap \pi$, together with $q$ span an ordinary solid, unless there is another point $p\in \mathcal{S} \setminus (\pi \cup \{q\})$, on the solid. However, for each pair of points $x,y$ of $\mathcal{S} \cap \pi$, consider the plane of $\pi$ spanned by $x,y$ and the point of intersection of line joining $p$ and $q$ with $\pi$. By hypothesis, this plane contains at most one other point of $\mathcal{S}$. Thus, we conclude that, for a fixed $x,y\in\mathcal{S} \cap \pi$, all but at most $c-1$ of the triples $x,y,z$, where $z\in\mathcal{S} \cap \pi$, span an ordinary solid with $q$. Therefore, there are at least 
 $$
 {n -c\choose 3}-(c-1){n-c \choose 2}
 $$ 
 triples of $\pi \cap \mathcal{S}$ which together with $q$ span an ordinary solid.
 
 Hence, 
 $$
 c\Big({n -c\choose 3}-(c-1){n-c \choose 2}\Big) \leqslant Kn^3.
 $$ 
 For $n$ large enough, this implies that $c \leqslant 6K$ and we have case (i).
 
We can suppose from now on that no point of $\mathcal{S}'$ projects the set $\mathcal{S}$ into
 a set almost contained in a plane.\\
 
We want to prove that almost none of the points of $\mathcal{S}'$ can project the set 
 $\mathcal{S}$ into a set of type \eqref{stru3cas2}.
 
 Let us suppose that there are at least four points in $\mathcal{S}'$ such that the projection
 of $\mathcal{S}$ from these points is as in case \eqref{stru3cas2}. We denote these points by $p_1, p_2, p_3, p_4$. This means that, for each of these points, there are two 
 planar conics containing $\frac{n}{2} - O(K)$ points of the projection each.
 
 The lift of these two planes to $\mathbb{P}^4(\mathbb{R})$ consists of two hyperplanes $H_1$
 and $H_2$ containing $\frac{n}{2} - O(K)$ points of $\mathcal{S}$ each.
 
 We claim that the lift of the planes produced by each of the points $p_i$ must result
 in the same two hyperplanes $H_1$ and $H_2$. Suppose not, that the lift
 of one of the planes produced by some of the $p_i$ produces a hyperplane $H_3$,
 different from $H_1$ and $H_2$. The intersections $\pi_1 = H_1 \cap H_3$ and 
 $\pi_2 = H_2 \cap H_3$ are both planes on $\mathbb{P}^4(\mathbb{R})$. Since all $H_1, H_2$
 and $H_3$ contain $\frac{n}{2} - O(K)$ points of $\mathcal{S}$, one of
 $\pi_1$ or $\pi_2$ would have at least $\frac{n}{4} - O(K)$ points of $\mathcal{S}$,
 which is a contradiction since we cannot have four coplanar points in $\mathcal{S}$.
 
 Now, as all $p_i$ project the points in the two hyperplanes $H_1$ and $H_2$ into
 two planar conics, we deduce that the points $p_i$ are all contained in the
 hyperplanes $H_1$ and $H_2$. That implies they are in the intersection of the 
 two hyperplanes which, since the hyperplanes are distinct, is a plane of
 $\mathbb{P}^4(\mathbb{R})$. By hypothesis, the set $\mathcal{S}$ does not have four coplanar points, a contradiction.
 
 Thus, we conclude that there are at most three points of $\mathcal{S}'$ which project the
 set $\mathcal{S}$ into a set of type \eqref{stru3cas2}.\\
 
 Therefore, removing at most three points from $\mathcal{S}'$, we can assume that all points of $\mathcal{S}'$ project the
 set $\mathcal{S}$ into a set of type \eqref{stru3cas1}.

 Now, let us consider two of these points $p_1$ and $p_2 \in \mathcal S'$, projecting $\mathcal S$ onto a set of type (a). Apply a
 projective transformation so they become the points $(1: 0 : 0 : 0 : 0)$ and
$(0: 1 : 0 : 0 : 0)$ respectively.
 
 Let $Q_1$ and $Q_1'$ be the two quadrics degenerate at $p_1$ given by (a), and let $Q_2$ and $Q_2'$ be the quadrics degenerate at $p_2$, also given by (a). Then these
 quadrics will have coefficient $0$ at the $X_1$ and $X_2$ terms respectively.
 
 %\begin{equation*}
  %\begin{aligned}
  % Q_1  &= a_{22}X_2^2 + \dots + a_{55}X_5^2 + a_{23}X_2X_3 + \dots a_{45}X_4X_5 \\
  % Q_1' &= b_{22}X_2^2 + \dots + b_{55}X_5^2 + b_{23}X_2X_3 + \dots b_{45}X_4X_5 \\
  % Q_2  &=  c_{22}X_1^2 + c_{33}X_3^2 + \dots + c_{55}X_5^2 + c_{13}X_1X_3 + \dots c_{45}X_4X_5 \\
  % Q_2' &= d_{22}X_1^2 + d_{33}X_3^2 + \dots + d_{55}X_5^2 + d_{13}X_1X_3 + \dots d_{45}X_4X_5
  %\end{aligned}
 %\end{equation*}

First we prove that if $Q_1, Q_1'$ and $ Q_2, Q_2'$ are not linearly independent quadrics then $p_1$ and $p_2$ are contained in many ordinary solids. Indeed, if $Q_1, Q_1'$ and $ Q_2, Q_2'$ are linearly dependent quadrics then there are constants $\lambda, \lambda', \mu, \mu'$, not all zero, such that
$$
\lambda Q_1+\lambda' Q_1'+\mu Q_2+\mu' Q_2'=0.
$$
Therefore,  the quadric 
$$
Q=\lambda Q_1+\lambda' Q_1'=-\mu Q_2-\mu' Q_2'=e_{34}X_3X_4+e_{35}X_3X_4+e_{45}X_4X_5,
$$
is degenerate at both $p_1$ and $p_2$.

The quadric $Q$ contains all but at most $O(K)$ points of $\mathcal{S}$, so the projection of $\mathcal{S}$ from $p_1$ and $p_2$, projects all but at most $O(K)$ points of $\mathcal{S}$ onto a (possibly degenerate) conic. By hypothesis, any four points of $\mathcal S$ span a solid, so no two points project to the same point of the conic. Suppose $x \in \mathcal S$ projects onto the conic. Together with any one of all but at most $O(K)$ of the other points of $\mathcal S$ the point $x$ spans an ordinary solid with $p_1$ and $p_2$. This gives at least $\frac{1}{2}n^2-O(Kn)$ ordinary solids, containing $p_1$ and $p_2$. Hence, there are less than $3Kn$ pairs $p_1,p_2$ for which $Q_1, Q_1'$ and $ Q_2, Q_2'$ are not linearly independent quadrics.
 
A simple count shows that we can choose the points $p_1,p_2,p_3 \in \mathcal S'$ so that they are all of type (a) and are all contained in the intersection of $ Q_1, Q_1',Q_2, Q_2',Q_3,Q_3'$ and that $Q_1, Q_1'$ and $ Q_2, Q_2'$ are linearly independent quadrics. Indeed, the number of triples of points of $\mathcal S'$ which we cannot choose is $O(Kn^2)$.
Here $Q_i, Q_i'$ are the quadrics which are degenerate at $p_i$ given by $(a)$. 

Then we have that
  \begin{equation*}
  \begin{aligned}
   Q_3(X)  &= e_{12}X_1X_2 + e_{14}X_1X_4+e_{15}X_1X_5+  e_{24}X_2X_4+e_{25}X_2X_5+e_{44}X_4^2+ e_{55}X_5^2 +  e_{45}X_4X_5 \\
   Q_3'(X) &= f_{12}X_1X_2 + f_{14}X_1X_4+f_{15}X_1X_5+  f_{24}X_2X_4+f_{25}X_2X_5+f_{44}X_4^2+ f_{55}X_5^2 +  f_{45}X_4X_5\\
  \end{aligned}
 \end{equation*}
 Let $b_3(X,Y)$ and $b_3'(X,Y)$ be the bilinear forms associated with the quadratic forms $Q_3$ and $Q_3'$ respectively. If $x$ is a common zero of both $Q_3$ and $Q_3'$ then all the points on the line joining $x$ and $p_1$ are zeros of
 $$
 b_3(X,p_1)Q_3'(X)-b_3'(X,p_1)Q_3(X).
 $$
 One can think of this as the cubic curve in the plane which is obtained by projecting the quadrics $Q_3$ and $Q_3'$ from $p_1$ and $p_3$. Calculating this curve explicitly, we obtain
 $$
 ( e_{12}X_2 + e_{14}X_4+e_{15}X_5)( f_{24}X_2X_4+f_{25}X_2X_5+f_{44}X_4^2+ f_{55}X_5^2 +  f_{45}X_4X_5)-
 $$
 $$
 (f_{12}X_2 + f_{14}X_4+f_{15}X_5)( e_{24}X_2X_4+e_{25}X_2X_5+e_{44}X_4^2+ e_{55}X_5^2 +  e_{45}X_4X_5).
 $$
If both the two quadrics $Q_3, Q_3'$ have zero coefficient at 
 $X_1X_2$ (i.e. $e_{12}=f_{12}=0$) then we see that the point $p_2$ is a singularity of the cubic curve (since the degree of the polynomial in $X_2$ is one). 
 
 Since there is a most one singularity, we can find three points $p_1,p_2,p_3 \in {\mathcal S}'$ which are all of type (a), have the property that $p_i \in Q_j \cap Q_j'$, and for which $p_2$ 
is not a singularity of the cubic curve obtained by projecting the quadrics $Q_3$ and $Q_3'$ from the points $p_1$ and $p_3$. Moreover, $Q_1, Q_1', Q_2, Q_2'$ are linearly independent quadrics. 

As we have just seen, in this case at least one of the quadrics $Q_3$ or $Q_3'$ is linearly independent of $Q_1, Q_1',Q_2, Q_2'$. Hence, all but at most $O(K)$ points of $\mathcal{S}$ are contained in the intersection of five  linearly independent quadrics.
\end{proof}

\section{Comments and conjectures}

The most natural question to ask is if one can strengthen Theorem~\ref{FullStr} (ii) to state that all but $O(K)$ points of $\mathcal{S}$ are contained in the lift of a coset of a subgroup of an elliptic curve. This may involve a strengthening of the hypothesis on $K$. 

It seems reasonable to expect that one can improve the bound $K=o(n^{1/7})$ to $K=o(n^{1/6})$ by relying only on \cite[Proposition 5.3]{GT2013} and bypassing \cite[Theorem 1]{Ball2018}. However, we were not able to do this.

%Observe that for each point $p$ in the intersection of the quadrics, there are two quadrics in the subspace of quadrics which are degenerate at $p$ This implies that the projection of the intersection of the quadrics to the plane from $p$ and any other point in the intersection of the quadrics, is a cubic curve.

One can also consider the problem in higher dimensions, and the work included here gives us some clues to its solution.

It is easy to see that some of the concepts we have defined throughout, such as the $5$-cell grid, or the concepts of good and bad edges, are
naturally generalised to higher dimensions, the higher dimensional problem being first proposed in \cite{BM2016}. But the key ingredient of our study,
the nature of the variety containing almost all the points of the sets spanning
few ordinary hyperplanes, is a bit harder to generalise to higher dimensions.

We have talked briefly of the relation between the key varieties of the examples
in two, three and four dimensions. 
The relation between the varieties of different dimensions is very natural. It
comes from the fact that, if $\mathcal{S}$ is a set with few ordinary hyperplanes, the
projection of $\mathcal{S}$ from most of its points is a set with few ordinary hyperplanes
in a lower dimension. Thus, it is only natural that the varieties containing
these set of points are lifts of the other.

The work of Glynn \cite{Glynn94} gives us reasonable enough evidence to conjecture the nature
of these varieties.

\begin{conjecture} \label{conj12}
 Let $\mathcal{S}$ be a set of $n$ points on $\mathbb{P}^d({\mathbb R})$, where $d \geqslant 4$ is a constant or a possibly sub-linear function of $n$, such that any subset of $d$ points of $\mathcal{S}$ span a hyperplane of $\mathbb{P}^d({\mathbb R})$. Suppose that $\mathcal{S}$ spans at most
 $Kn^{d-1}$ hyperplanes for some $K=o(n)$, then one of the following holds:
 \begin{enumerate}
  \item[(i)]
   All but at most $O(K)$ points of $\mathcal{S}$ are contained in a single hyperplane.
  \item[(ii)] There is a set of ${d \choose 2} - 1$ linearly independent quadrics
  such that all but at most $O(K)$ points of $\mathcal{S}$ are contained in the intersection
  of the quadrics.
 \end{enumerate}
\end{conjecture}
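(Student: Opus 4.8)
The plan is to prove the conjecture by induction on $d$, taking the cases $d=2$ (Green and Tao), $d=3$ (Ball \cite{Ball2018}) and $d=4$ (Theorem~\ref{FullStr}) as the base, and running the inductive step by projecting $\mathcal{S}$ from a typical point into $\mathbb{P}^{d-1}({\mathbb R})$. First I would transplant the combinatorial machinery of this paper to $\mathbb{P}^d({\mathbb R})$: the dual graph $\Gamma$, the notion of a $(d+1)$-cell grid (hyperplanes of $d+1$ families meeting in a point if and only if their indices sum to zero), and the good/bad/slightly bad edges, where now a good edge has end vertices incident with exactly $d+1$ hyperplanes and lies only in simplex faces. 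Running the Euler-characteristic identity behind Lemma~\ref{bedges} in each plane $\pi\cap\pi'$, together with the edge--face and vertex--edge double counts, should once more give that a set spanning at most $Kn^{d-1}$ ordinary hyperplanes produces only $O_d(Kn^d)$ bad, hence $O_d(Kn^d)$ slightly bad, edges in $\Gamma$; and the analogue of Lemma~\ref{rathergoodedge} --- that the neighbourhood of a rather good edge is a $(d+1)$-cell grid --- should follow by feeding in the corresponding ``structure around an edge'' facts in dimensions $d-1$ and $d-2$.

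The technical heart, and the step I expect to be the hardest, is the algebraic one: generalising Lemma~\ref{11pts} and Lemma~\ref{10points}. One needs a bounded-size configuration of points of a $(d+1)$-cell grid lying on $\binom{d}{2}-1$ linearly independent quadrics --- one more than one would expect of that many generic points --- together with a companion configuration, one point smaller, so rigid that every quadric through all but one of its points contains the last. As in the present paper I would manufacture quadrics as unions $H\cup\gamma(H)$ of grid hyperplanes paired by the reflection $\gamma$ of the grid, so that these hyperplane-pair quadrics are $\gamma$-invariant, and then complete the list with a bounded number of further quadrics pinned down by forcing them through extra grid points and invoking $\gamma$-invariance; the rigidity statement would reduce to a rank computation on the condition matrix, using that any $d$ grid points whose indices do not sum to zero are linearly independent. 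Identifying the right small configurations is precisely where the authors' warning that the variety ``is a bit harder to generalise to higher dimensions'' bites: I expect both the number of points in these configurations and the attendant case analysis to grow with $d$, so that a uniform description --- perhaps organised around the rational normal curve, in the spirit of Theorem~\ref{Glyn} and Theorem~\ref{glynn+} --- will be needed to keep it tractable. Granting these lemmas, the analogue of Theorem~\ref{quadricsx}, that the whole grid around a segment of rather good edges lies on $\binom{d}{2}-1$ linearly independent quadrics, follows by the same induction along the segment.

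With the grid understood, the structural argument mirrors the proof of Theorem~\ref{FullStr}. Let $\mathcal{S}'$ be the points of $\mathcal{S}$ lying on $O_d(Kn^{d-1})$ ordinary hyperplanes, so that $|\mathcal{S}'|=(1-o(1))n$; projecting from $p\in\mathcal{S}'$ gives a set in $\mathbb{P}^{d-1}({\mathbb R})$ spanning few ordinary hyperplanes, to which the inductive hypothesis applies. If some projection is almost contained in a hyperplane, say a hyperplane $\pi$ of $\mathbb{P}^d({\mathbb R})$ carries $n-c$ points of $\mathcal{S}$, then each of the $c$ outside points lies on at least $\binom{n-c}{d-1}-c$ ordinary hyperplanes (one per $(d-1)$-subset of $\pi\cap\mathcal{S}$), forcing $c\big(\binom{n-c}{d-1}-c\big)\leq Kn^{d-1}$ and hence $c=O_d(K)$, which is conclusion (i). The intermediate degenerate outcomes of the inductive hypothesis --- such as two hyperplane sections of a quadric each carrying about half the points --- can be ruled out for all but $O_d(1)$ points of $\mathcal{S}'$ by the observation that the lifted hyperplanes attached to all such points must coincide, whence those points would be coplanar. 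So most $p\in\mathcal{S}'$ project onto the intersection of $\binom{d-1}{2}-1$ quadrics, and lifting these, the span of the $\binom{d}{2}-1$ quadrics attached to the grid contains, for each grid point, the $\binom{d-1}{2}-1$ cones over the projected quadrics with that vertex. Collecting the cone-quadrics of a few points of the $\Omega(n/K)$ points whose duals meet a long segment of rather good edges (produced by pigeonhole from the slightly-bad-edge count above), and normalising coordinates by a projective transformation, one can argue as on the last page of the proof of Theorem~\ref{FullStr} that among these cone-quadrics there are $\binom{d}{2}-1$ linearly independent ones, each vanishing on all but $O_d(K)$ points of $\mathcal{S}$; Theorem~\ref{glynn+} serves along the way to bound how many points of $\mathcal{S}$ a hyperplane can meet.

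The \textbf{main obstacle} I anticipate is not the shape of the variety but the admissible range of $K$. The conjecture asks for $K=o(n)$, whereas the graph-theoretic route above --- as in \cite{Ball2018} and in this paper --- appears to survive only for $K$ of order $o(n^{1/7})$: turning ``a long segment of good edges'' into ``almost all of $\mathcal{S}$ lies on the variety'' and then iterating $d-2$ projections forces several error terms of the shapes $n/K$ and $K^{O(1)}$ to be simultaneously negligible. Closing this gap --- even for $d=4$, where one would like at least $o(n^{1/6})$ --- seems to require feeding Green and Tao's additive-combinatorial input \cite[Proposition 5.3]{GT2013} in directly at the $\mathbb{P}^2({\mathbb R})$ level, rather than relaying it through the lower-dimensional structure theorems, together with a careful accounting of how the $O(K)$ exceptional sets accumulate as one climbs back up the tower of projections.
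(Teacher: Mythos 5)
This statement is stated in the paper as a \emph{conjecture}: the authors offer no proof of it, so there is no argument of theirs to measure yours against. What you have written is a research plan rather than a proof, and while it is a sensible plan --- it is essentially the programme the authors themselves gesture at (generalise the $5$-cell grid, the good/bad edge counts of Lemmas~\ref{bedges} and~\ref{sbedges}, and the projection argument of Theorem~\ref{FullStr} to $\mathbb{P}^d({\mathbb R})$) --- it contains two gaps that prevent it from being a proof. First, the algebraic core is missing: you correctly identify that everything hinges on $d$-dimensional analogues of Lemma~\ref{11pts} and Lemma~\ref{10points} (a bounded configuration of grid points imposing one fewer condition than expected on quadrics, plus the companion rigidity statement), but you only assert that such configurations ``should'' exist and that $\gamma$-invariant hyperplane-pair quadrics ``should'' produce them. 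This is precisely the step the authors single out as ``a bit harder to generalise to higher dimensions,'' and without an explicit configuration and an explicit rank computation for general $d$ there is no theorem. Note also that for general $d$ the count has to come out at exactly $\binom{d}{2}-1$ independent quadrics, and verifying that the hyperplane-pair construction yields that many \emph{independent} ones, rather than fewer, is itself a nontrivial linear-algebra claim that you have not addressed.

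Second, and more fundamentally, the range of $K$ is wrong by a polynomial factor, as you yourself concede. The conjecture demands $K=o(n)$, whereas the entire edge-counting and projection machinery --- in \cite{GT2013}, in \cite{Ball2018}, and in Theorem~\ref{FullStr} --- delivers only $K=o(n^{1/7})$, and your induction on $d$ makes this worse, not better: at each projection step you must invoke the $(d-1)$-dimensional statement with $K$ replaced by $dK$ (or worse), and the base cases you propose to stand on ($d=3$ and $d=4$) are themselves only known for $K=o(n^{1/7})$. So the induction cannot be run with the inductive hypothesis ``the conjecture holds in dimension $d-1$ for all $K=o(n)$''; at best you would prove a version of the conjecture with a much more restrictive, $d$-dependent bound on $K$. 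Acknowledging an obstacle is not the same as overcoming it: as written, the proposal establishes neither the algebraic lemmas nor the claimed range of $K$, and hence does not prove the conjecture. It would be more accurate to present it as a plausible strategy for a weaker statement (with $K=o(n^{c_d})$ for some small $c_d>0$), together with an honest identification of the two open problems --- the higher-dimensional grid-quadric lemmas and the improvement of the exponent --- that would need to be solved first.
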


We could go further and conjecture that all but at most $O(K)$ points of $\mathcal{S}$ are contained in the lift of a coset of a subgroup of an elliptic curve. This may require a strengthening of the hypothesis on $K$.

Since the publication of a preprint of this article, the following theorem has appeared in \cite[Theorem 1.1]{LS2020}, which proves this stronger version of Conjecture~\ref{conj12}, albeit with a stronger hypothesis.

\begin{theorem}
 Let $\mathcal{S}$ be a set of $n$ points of $\mathbb{P}^d({\mathbb R})$, where $d \geqslant 4$ and $C\geqslant \mathrm{max} ((dK)^8,d^32^dK)$ for some sufficiently large
absolute constant $C$, such that any subset of $d$ points of $\mathcal{S}$ span a hyperplane of $\mathbb{P}^d({\mathbb R})$. If $\mathcal{S}$ spans at most
 $K{n-1 \choose d-1}$ hyperplanes then $\mathcal S$ differs by at most $O(d2^dK)$ points from one of the following:
 \begin{enumerate}
  \item[(i)]
A subset of a hyperplane.
  \item[(ii)] A coset of a subgroup of an elliptic curve or the smooth points of a rational acnodal curve of degree $d+1$.
 \end{enumerate}
\end{theorem}

\section{Acknowledgements}

The authors would like to thank Massimo Giulietti for some useful discussions about lifts of elliptic curves. We also thank the referee whose detailed comments and suggestions were greatly appreciated.

\bigskip

{\small Simeon Ball and Enrique Jimenez}  \\
{\small Departament de Matem\`atiques}, \\
{\small Universitat Polit\`ecnica de Catalunya, Jordi Girona 1-3},
{\small M\`odul C3, Campus Nord,}\\
{\small 08034 Barcelona, Spain} \\
{\small {\tt simeon@ma4.upc.edu}}

\end{document}